\providecommand{\norm}[1]{\lVert#1\rVert}
\DeclareMathOperator*{\argmin}{arg\,min}
\theoremstyle{thmstyleone}%
\newtheorem{theorem}{Theorem}
\theoremstyle{thmstyletwo}%
\newtheorem{example}{Example}%
\newtheorem{remark}{Remark}%
\newtheorem{corollary}{Corollary}
\theoremstyle{thmstylethree}%
\newtheorem{definition}{Definition}%
\newcommand{\red}{\textcolor{black}}
\begin{document}

\title[Proximal gradient methods with inexact oracle]{Proximal gradient methods  with inexact oracle of degree $q$ for composite optimization}

\author[1]{\fnm{Yassine} \sur{Nabou}}\email{yassine.nabou@stud.acs.upb.ro}

\author[2,3]{\fnm{Fran\c cois} \sur{Glineur}}\email{francois.glineur@uclouvain.be}

\author[1,4]{\fnm{Ion} \sur{Necoara}}\email{ion.necoara@upb.ro}

\affil[1]{\orgdiv{Automatic Control and Systems Engineering Department}, \orgname{National University of Science and Technology Politehnica Bucharest}, \orgaddress{\street{Spl. Independentei, 313}, \city{Bucharest}, \postcode{060042}, \country{Romania}}}

\affil[2]{\orgdiv{ICTEAM Institute}, \orgname{Universit\'{e} catholique de Louvain}, \orgaddress{\city{Louvain-La-Neuve}, \postcode{1348}, \country{Belgium}}}

\affil[3]{\orgdiv{CORE}, \orgname{Universit\'{e} catholique de Louvain}, \orgaddress{\postcode{1348}, \country{Belgium}}}

\affil[4]{\orgdiv{Gheorghe Mihoc-Caius Iacob} \orgname{Institute of Mathematical Statistics and Applied Mathematics of the Romanian Academy}, \city{Bucharest}, \postcode{610101}, \country{Romania}}


\abstract{We introduce the concept of inexact first-order oracle of degree $q$ for a possibly nonconvex and nonsmooth function, which naturally appears in the context of approximate gradient, weak level of smoothness and other situations. Our definition is less conservative than those found in the existing literature, and \red{ it can be viewed as an interpolation between fully exact and the existing inexact first-order oracle definitions}. We analyze the convergence behavior of a (fast) inexact proximal gradient method using such an oracle for solving (non)convex composite minimization problems.  We derive complexity estimates and study the dependence between the accuracy of the oracle and the desired accuracy of the gradient or of the objective function. Our results show that better rates can be obtained both theoretically and in numerical simulations  when $q$ is large.}

\keywords{Composite problems, inexact first-order oracle, proximal gradient,  convergence analysis.}

\pacs[MSC Classification]{90C25, 90C06, 65K05.}

\maketitle

\section{Introduction}
\label{intro}
Optimization methods based on gradient information are widely used in applications where high accuracy is not desired, such as machine learning, data analysis, signal processing and statistics \cite{Pol:63,Bec:17,Bot:10,WaLI:14}. The standard convergence analysis of gradient-based methods requires the availability of the exact gradient information for the objective function. However, in many optimization problems, one doesn't have access to exact gradients, e.g., the gradient is obtained by solving another optimization problem. In this case  one can use  inexact (approximate) gradient information. In this paper, we consider the following composite optimization problem:
\begin{align}\label{eq:opt_prblm}
\min\limits_{x\in\mathbb{E}} f(x): = F(x) + h(x),
\end{align}  
where $h: \mathbb{E} \to \bar{\mathbb{R}}$ is a simple (i.e., proximal easy) closed convex function, $F: \mathbb{E} \to  \mathbb{R}$ is a general lower semicontinuous function (possibly nonconvex) and there exist $f_\infty$ such that $f(x)\geq f_\infty \red{>-\infty}$ for all  $x \in\text{dom} \,f = \text{dom}\, h$. We assume that we can compute exactly the proximal operator of $h$, and that we cannot have access to the (sub)differential of $F$, but can compute an approximation of it at any given point.  Optimization algorithms with inexact first-order oracles are well studied in literature, see e.g.,  \cite{BoGuRa:16,CoDiOr:18,Asp:08,DeGlNe:13,DvGa:16,Dvur:17,StDv:19}.  For example, \cite{DeGlNe:13} considers the case where $h$ is the indicator function of \red{a convex set $Q$} and $F$ is a  convex function, and \red{introduces} the so-called inexact first-order \red{$(\delta,L)$-oracle} for $F$, i.e., for any $y\in Q$ one can compute an inexact oracle consisting of a pair $(F_{\delta,L}(y), g_{\delta, L}(y))$ such that: 
\begin{align}\label{def:inxt}
0\leq F(x) - \Big(F_{\delta,L }(y) + \langle g_{\delta,L}(y),x-y \rangle\Big)\leq \frac{L}{2}\norm{x-y}^{2} + \delta \quad \forall x \in Q.
\end{align}
Then,  \cite{DeGlNe:13} introduces (fast) inexact first-order methods based on $g_{\delta, L}(y)$ information and  derives asymptotic convergence in function values of order $\mathcal{O}\left(\frac{1}{k} + \delta \right)$ or $\mathcal{O}\left(\frac{1}{k^2} + k\delta\right)$, respectively. One can notice that in the nonaccelerated scheme, the objective function accuracy decreases with $k$ and asymptotically tends to $\delta$, while in the {accelerated scheme}, there is error accumulation. Further, \cite{Dvur:17} considers problem \eqref{eq:opt_prblm} with domain of $h$  bounded, and introduces the following inexact first-order oracle:
\begin{align*}
\vert F(x) - F_{\delta,L}(x)\vert \leq \delta,\;\;
F(x) - F_{\delta,L}(y) - \langle g_{\delta,L}(y),x-y \rangle\leq \frac{L}{2}\| x - y \|^2 + \delta.
\end{align*} 
Under the assumptions that  $F$ is nonconvex and $h$ is convex but with bounded domain,  \cite{Dvur:17} derives a sublinear rate in the squared norm of the generalized gradient mapping of order $\mathcal{O}\left(\frac{1}{k} + \delta\right) $ for an inexact proximal gradient method based on $g_{\delta, L}(y)$ information. Note that all previous results provide convergence rates under the assumption of the boundedness of the domain of $f$ (or equivalently of $h$). An open question is whether one can modify the previous definitions of inexact first-order oracle to cover both the convex and nonconvex settings, in order to be more general and to improve the convergence results of an algorithm based on this inexact information. \red{More precisely, can one define a general inexact oracle that bridges the gap between exact oracle (exact gradient information) and the existing inexact first-order oracle definitions found in the literature \cite{DeGlNe:13, Dvur:17}?} In this paper we answer this question positively for both convex and nonconvex problems, introducing a suitable definition of inexactness for a first-order oracle for $F$ involving some degree $0\leq q < 2$, which consist in multiplying the constant $\delta$ in \eqref{def:inxt} with quantity $\| x - y \|^q$ (see  Definition \ref{eq:def1}). We provide several examples that can fit in our proposed inexact first-order oracle framework, such as approximate gradient or weak level of smoothness, and show that, under this new definition of inexactness, we can remove the boundedness assumption of the domain of $h$. Then, we consider an inexact proximal gradient algorithm  based on this inexact first-order oracle and provide convergence rates of order $\mathcal{O}\left(\frac{1}{k} + \delta^{2/(2-q)} \right)$ for  $q \in [0, 1)$ and $\mathcal{O}\left(\frac{1}{k} + \frac{\delta}{k^{q/2}}+ \frac{\delta^2}{k^{q-1}} \right)$ for  $q \in [1, 2)$ for nonconvex composite problems, and of order  $\mathcal{O}\left(\frac{1}{k} + \frac{\delta}{k^{q/2}} \right)$ for  $q \in [0, 2)$ for convex composite problems in the form \eqref{eq:opt_prblm}. We also derive convergence rates {of order $\mathcal{O}(\frac{1}{k^2} + \frac{\delta} {k^{(3q-2)/2}})$} for a fast inexact proximal gradient algorithm  for solving the convex composite problem \eqref{eq:opt_prblm}.  Note that  our convergence rates are better as  $q$ increases. In particular, for the inexact proximal gradient algorithm the power of $\delta$ in the convergence estimate  is higher for $q \in (0,1)$ than for $q=0$, while for $q \geq 1$  the coefficients of $\delta$ diminishes with $k$. For the fast inexact proximal gradient method we show that there is no error accumulation for $q \geq 2/3$.  Hence, it is beneficial to consider an inexact  first-order oracle of degree $q > 0$, as this allows us to work with less accurate approximation of the (sub)gradient of $F$ when  $q$ is large.


\section{Notations and preliminaries}
\noindent In what follows $\mathbb{R}^n$ denotes the finite-dimensional Euclidean space endowed with the standard inner product $\langle s, x\rangle = s^Tx$ and the corresponding norm $\| s \| = \langle s,s\rangle^{1/2}$ for any $s\in\mathbb{R}^n$. For a proper lower semicontinuous convex function $h$ we denote its domain by $\text{dom}\;h = \lbrace x\in\mathbb{R}^n:\; h(x)< \infty \rbrace $ and its proximal operator as:
\vspace{-0.5cm}
\begin{align*}
\text{prox}_{\gamma h}(x) := \argmin_{y\in\text{dom}\;h} h(y) + \frac{1}{2\gamma}\| x - y\|^2.
\end{align*}

\vspace{-0.4cm}

\noindent \red{Next, we provide a few definitions and properties for subdifferential calculus in the nonconvex settings (see \cite{Mor:06,RoWe:98} for more details).
\begin{definition}
\textbf{(Subdifferential)}: Let $f: \mathbb{R}^n \to \bar{\mathbb{R}}$ be a proper lower semicontinuous function. For a given $x \in \text{dom} \; f$, the Fr$\acute{e}$chet subdifferential of $f$ at $x$, written $\widehat{\partial}f(x)$, is the set of all vectors $g_{x}\in\mathbb{R}^n$ satisfying:
\begin{equation*}
\lim_{x\neq y}\inf\limits_{y\to x}\frac{f(y) - f(x) - \langle g_{x}, y - x\rangle}{\norm{x-y}}\geq 0.
\end{equation*}
When $x \notin\text{dom} \; f$, we set $\widehat{\partial} f(x) = \emptyset$. The limiting-subdifferential, or simply the subdifferential, of $f$ at $x\in \text{dom} \, f$, written $\partial f(x)$, is defined as \cite{Mor:06}:
\begin{align*}
\partial f(x):= \left\{ g_{x}\in \mathbb{R}^n\!\!: \exists x^{k}\to x, f(x^{k})\to f(x) \; \text{and} \; \exists g_{x}^{k}\in\widehat{\partial} f(x^{k}) \;\; \text{such that} \;\;  g_{x}^{k} \to g_{x}\right\}. 
\end{align*} 
\end{definition}
\noindent Note that we have $\widehat{\partial}f(x)\subseteq\partial f(x)$  for each $x\in\text{dom}\,f$. For $f(x) = F(x) + h(x)$, if $F$ and $h$ are regular at $x\in\text{dom}f$, then we have:
\begin{align*}
    \partial f(x) = \partial F(x) + \partial h(x).
\end{align*}
(see Theorem 6 in \cite{Hir:79} for more
details). Further, if $f$ is proper, lower semicontinuous and convex, then \cite{RoWe:98}:
\begin{align*}
  \partial f(x) = \partial \hat{f}(x) = \{\lambda \in\mathbb{R}^n: f(y)\geq f(x) + \langle\lambda,y-x \rangle\;\; \forall y\in\mathbb{R}^n\}.  
\end{align*}
} 

\noindent A function $F : \mathbb{R}^n\to \mathbb{R}$ is $L_F$-smooth if it is differentiable and its gradient is $L_F$ Lipschitz, i.e., satisfying:
\begin{align*}
\| \nabla F(x) - \nabla F(y) \| \leq L_F \| x - y \|,\quad \forall x,y\in\mathbb{R}^n.
\end{align*}
It follows immediately that \cite{Nes:04}:
\begin{align}\label{eq:smth}
\vert F(x) - (F(y) + \langle \nabla F(y),x-y \rangle) \vert \leq \frac{L_F}{2} \| x - y \|^{2}, \quad \forall x,y\in\mathbb{R}^n. 
\end{align} 
Finally, let us recall the following classical weighted arithmetic-geometric mean inequality:
if $a,b$ are positive constants and $0\leq \alpha_1 ,\alpha_2 \leq 1$, such that $\alpha_1 + \alpha_2 = 1$, then  $a^{\alpha_1}b^{\alpha_2}\leq \alpha_1 a + \alpha_2 b$.  We will later use the following consequence
for  $\rho >0$, $a=\rho\norm{x-y}^2$,  $b=\frac{\delta_{q}^{\frac{2}{2-q}}}{\rho^{\frac{q}{2-q}}}$, $\alpha_1 = \frac{q}{2}$ and $\alpha_2 =  \frac{2 - q}{2}$:
\begin{align} \label{eq:apd1}
{\delta_q} \norm{x-y}^q \leq \frac{q\rho\norm{x-y}^2}{2} + \frac{(2-q)\delta_{q}^{\frac{2}{2-q}}}{2\rho^{\frac{q}{2-q}}}.
\end{align}

\section{Inexact first-order oracle of degree $q$}
\noindent In this section, we introduce our new inexact first-order oracle of degree $0\leq q <2$ and provide some nontrivial examples that fit into our framework. Our oracle can deal with general functions (possibly with unbounded domain),  unlike the previous results in \cite{DeGlNe:13,Dvur:17},  but requires  exact  zero-order information.  

\begin{definition}\label{eq:def1}
The function $F$ is equipped with an inexact  first-order $(\delta, L)$-oracle of degree $q \!\in\! [0,2)$ if for any $y \!\in\! \text{dom} f$ one can compute $ g_{\delta,L,q}(y) \in \mathbb{E}^{*}$ such that:
\begin{align}\label{eq:inx2}
 F(x) \!-\! \left(F(y) + \langle g_{\delta,L,q}(y),x-y \rangle\right) \!\leq\! \frac{L}{2}\| x-y \|^{2} + \delta\| x - y \|^q \;\;\;  \forall x \!\in\! \text{dom} f.
\end{align}
\end{definition}

\noindent To the best of our knowledge this definition of a first-order inexact oracle is new. \red{The motivation behind this definition is to introduce a versatile inexact first-order oracle framework that bridges the gap between exact oracle (exact gradient information, i.e., $q=2$) and the existing inexact first-order oracle definitions found in the literature (i.e., $q=0$). More specifically, when $q=2$, Definition \ref{eq:def1} aligns with established results for smooth functions under exact gradient information, while when $q=0$, our definition has been previously explored in the literature, see  \cite{DeGlNe:13,Dvur:17}.} Next, we provide several examples that satisfy Definition \ref{eq:def1} {naturally}, and then we provide theoretical results showing the advantages of this new inexact oracle over the existing ones from the literature.
\begin{example}(Smooth function with inexact first-order oracle).
\label{eq:ex1}
Let $F$ be differentiable and its gradient be Lipschitz continuous with constant  $L_F$ over $\text{dom}f$. Assume that for any $x\in\text{dom}\,f$, one can compute $g_{\Delta,L_F}(x)$, an approximation of the gradient $\nabla F(x)$ satisfying:
\begin{align}\label{eq:con_ex1}
\| \nabla F(x) - g_{\Delta,L_F}(x) \|\leq \Delta.
\end{align}
Then, $F$ is equipped with \red{$(\delta,L)$-oracle} of degree $q=1$ as in Definition \ref{eq:def1}, with  $\delta = \Delta$, $L=L_F$, and $g_{\delta,L,1}(x)= g_{\Delta,L_F}(x)$. Indeed, since $F$ is $L_F$-smooth, we get:
\begin{align*}
 F(y) - F(x) - \langle \nabla F(x),y-x \rangle&\leq \frac{L_F}{2}\| y - x \|^2.
\end{align*}
It follows that:
\begin{align*}
 F(y)\! -\! F(x)\! - \!\langle g_{\Delta,L_F}(x),y \!-\! x \rangle\! &\leq \frac{L_F}{2}\| y \!-\! x \|^2 \!+\! \| \nabla F(x) \!-\! g_{\Delta,L_F}(x) \|\,\| y \!-\! x \|\\
&\leq \frac{L_F}{2}\| y - x \|^2 + \Delta \| y - x \|. 
\end{align*}
which completes our statement.
Finite sum optimization problems appear widely in machine learning \cite{Bot:10} and deal with an objective $F(x): = \sum_{i=1}^{N} F_i(x)$, where $N$ is possibly large.  In the stochastic setting, we sample stochastic derivatives at each iteration in order to form a mini-batch approximation for the gradient of $F$. If we define:
\begin{align}\label{eq:sapl_stc}
g_{S}(x) = \frac{1}{\vert S\vert}\sum\limits_{j\in S} \nabla F_i(x),
\end{align}
where $S$ is a subset of $\{1,\ldots,N\}$, then condition \eqref{eq:con_ex1} holds \red{with} probability at least $1 - \Delta$ if the batch size $S$ satisfies $\vert S \vert = \mathcal{O}\left( \frac{\Delta^2}{L_{F}^2} + \frac{1}{N} \right)^{-1}$ (see Lemma 11 in \cite{AgKam:17}).   
\end{example}
\begin{remark}
This example has been also considered in \cite{DeGlNe:13,Dvur:17}. However, in these papers $\delta$ depends on the diameter of the domain of $f$, assumed to be bounded. Our inexact oracle is more general and doesn't require boundedness of the domain of $f$, {i.e.,} in our case $\delta = \Delta$, \red{while in \cite{DeGlNe:13,Dvur:17}, $\delta = 2\Delta D$, where $D$ is the diameter of the domain of $f$. Hence, our definition is more natural in this setting.}  
\end{remark}


\begin{example} 
(Computations at shifted points) \label{eq:ex2}
Let $F$ be differentiable with Lipschitz continuous gradient with constant $L_F$ over $\text{dom}f$. For any $x\in\text{dom}f$ we assume we can compute the exact value of the gradient, albeit evaluated at a shifted point $\Bar{x}$, different from $x$ and satisfying $\|x - \bar{x}\| \leq \Delta$. Then, $F$ is equipped with a $(\delta,L)$-oracle of degree $q=1$ as in Definition \ref{eq:def1}, with $g_{\delta,L,1}(x) = \nabla F(\bar{x})$, $L = L_F$ and $\delta = L_F \Delta$. Indeed, since $F$ is $L_F$ smooth, we have:
 \begin{align*}
     F(y)&\leq F(x) + \langle \nabla F(x),y-x \rangle + \frac{L_F}{2}\|y - x\|^2,\\
         & = F(x) + \langle \nabla F(\bar{x}),y-x \rangle + \langle \nabla F(x) -  \nabla F(\bar{x}),y-x \rangle  + \frac{L_F}{2}\|y - x\|^2,\\
         & \leq F(x) + \langle \nabla F(\bar{x}),y-x \rangle + \frac{L_F}{2}\|y - x\|^2 + L_F\|x - \bar{x}\|\|y - x\|,
 \end{align*}
where the second inequality follows from the Cauchy-Schwartz inequality. This proves our statement.
\end{example}
\begin{remark}
\red{This example was also considered in \cite{DeGlNe:13,Dvur:17}, with the corresponding   $(\delta,L)$-oracle having $\delta = L_F\Delta^2 $, $L = 2L_F$ and $q=0$. Note that our $L$ in  Definition \ref{eq:def1} is twice smaller than the corresponding $L$ in \cite{DeGlNe:13,Dvur:17}.}    
\end{remark}


\begin{example}\red{(Accuracy measures for approximate solutions)
\label{eq:ex3}
Let us consider a $F$ that is $L_F$ smooth, given by:
\begin{align*}
    F(x) = \max_{u\in U} \psi(x,u):=  \max_{u\in U} G(u) + \langle Au,x\rangle,
\end{align*}
where $A:\mathbb{E}\to \mathbb{E}^*$ is a linear operator and $G(\cdot)$ is a differentiable strongly concave function with concavity parameter $\kappa >0$. Under these assumptions, the maximization problem $\max_{u\in U} \psi(x,u)$ has only one optimal solution $u^{*}(x)$ for a given $x$. Moreover, $F$ is convex and smooth with Lipschitz continuous gradient $\nabla F(x) = \nabla_x \psi(x,u^{*}(x)) = Au^{*}(x)$ having Lipschitz constant $L_F  = \frac{1}{\kappa}\|A\|^2$ \cite{DeGlNe:13}. Suppose that for any $x\in\text{dom}f$, one can compute $u_x$ an approximate minimizer of $\psi(x,u)$ such that $\|u^{*}(x) - u_x\|\leq \Delta$. Then, $F$ is equipped with $(\delta,L)$-oracle of degree $q=1$ with $\delta = \Delta \|A\|$, $L = L_F$ and $g_{\delta, L, 1}(x) = Au_x$. Indeed,  since $F$ has Lipschitz-continuous gradient, we have:
\begin{align*}
    F(y) &\leq F(x) + \langle \nabla F(x),y - x \rangle + \frac{L_F}{2}\| y - x \|^2,\\
         & = F(x) + \langle \nabla_x \psi(x,u^{*}(x)), y - x \rangle + \frac{L_F}{2}\| y - x\|^2,\\
         & = F(x) + \langle Au^{*}(x), y - x \rangle + \frac{L_F}{2}\|y - x\|^2,\\
         & = F(x) + \langle Au_x
         ,y - x \rangle + \langle A(u^{*}(x) - u_x), y - x \rangle + \frac{L_F}{2}\|y - x\|^2,\\
         & \leq  F(x) + \langle Au_x
         ,y - x \rangle +  \|A\|\|u^{*}(x) - u_x\|\| y - x\| + \frac{L_F}{2}\|y - x\|^2,\\
          & \leq  F(x) + \langle Au_x
         ,y - x \rangle +  \Delta\|A\|\| y - x\| + \frac{L_F}{2}\| y - x\|^2.
\end{align*}
Hence, our statement follows.}
\end{example}

\begin{remark}
\red{This example was also considered in \cite{DeGlNe:13} with the corresponding $(\delta,L)-$ oracle having $\delta = \Delta $, $L = 2L_F$ and $q=0$, while in our case, we have $\delta = \Delta \|A\|$, $L = L_F$ and $q = 1$.}    
\end{remark}


\begin{example}(Weak level of smoothness)
\label{eq:ex4}
Let $F$ be a proper lower semicontinuous function with the subdifferential $\partial F(x) $ nonempty for all $x\in\text{dom}f$. Assume that $F$ satisfies the following Hölder condition with $H_{\nu}<\infty$:
\begin{align}\label{eq:hold}
\norm{g(x) - g(y)}\leq H_{\nu}\norm{y-x}^\nu,
\end{align}
for all $g(x)\in \partial F(x)$, $g(y)\in\partial F(y)$, where $x,y\in\text{dom}\,f$ and $\nu\in[0,1]$. Then, $F$ is equipped with \red{$(\delta,L)$-oracle} of degree $q$ as in Definition \ref{eq:def1}, with $g_{\delta,L,q}(x) \in\partial F(x)$, for any arbitrary degree $0 \leq q<1+\nu$ and any accuracy  $\delta >0$, and a constant $L$ depending on $\delta$ given by:
\[L(\delta) = \frac{1+\nu-q}{2-q}\left( \frac{H_{\nu}}{1+\nu}\right)^{\frac{2-q}{1+\nu-q}}\left(\frac{1-\nu}{\delta(2-q)}\right)^{\frac{1-\nu}{1+\nu - q}}.\]

\noindent Indeed, we have from Hölder condition~\cite{Nes:04}:
\begin{align*}
F(x) - F(y) - \langle g(y),x-y \rangle\leq \frac{H_{\nu}}{1 + \nu} \norm{x-y}^{1+\nu}.
\end{align*}
For any given $\delta >0$, we compute $L(\delta)$ such that the following inequality holds:
\begin{align*}
\frac{H_{\nu}}{1 + \nu}\norm{x-y}^{1+\nu}\leq\frac{L(\delta)}{2} \norm{x-y}^2 + \delta \norm{x-y}^q.
\end{align*}
Denote $r=\norm{x-y}$ and let $\lambda\in (0,1)$. Using the weighted arithmetic-geometric mean
inequality with $\alpha_1 = \lambda$ and $\alpha_2 = 1-\lambda$, we have:
\begin{align*}
\frac{L(\delta)r^2}{2} + \delta r^q &=\lambda\frac{L(\delta)}{2\lambda}r^2 + (1-\lambda)\frac{\delta}{1-\lambda} r^q\\ 
      						  &\geq \left(\frac{L(\delta)}{2\lambda} r^2\right)^{\lambda}\left(\frac{\delta}{1 \!-\! \lambda}r^q \right)^{1 \!-\! \lambda} \!=\! \left(\frac{L(\delta)}{2\lambda}\right)^{\lambda}\left(\frac{\delta}{1 \!-\! \lambda}\right)^{1 \!-\! \lambda}\!\! r^{2\lambda + q(1 \!-\! \lambda)}.\\
\end{align*}
Thus $\frac{H_{\nu}}{1+\nu}=\left(\frac{L(\delta)}{2\lambda}\right)^{\lambda}\left(\frac{\delta}{1-\lambda}\right)^{1-\lambda}$ and $1 + \nu=2\lambda + q(1-\lambda)$. It follows that $\lambda = \frac{1+\nu-q}{2-q}$, $1-\lambda = \frac{1-\nu}{2-q}$ and  $\frac{1}{\lambda} - 1 = \frac{1-\nu}{1+\nu - q}$.
Hence, for a given positive $\delta$  one may choose:
\begin{align*}
L(\delta) \!=\! 2\lambda \left(\frac{H_{\nu}}{1 \!+\! \nu}\right) ^{\frac{1}{\lambda}}\left(\frac{1 \!-\! \lambda}{\delta}\right)^{\frac{1}{\lambda} \!-\! 1} \!=\!  \frac{1 \!+\!  \nu \!-\! q}{2 \!-\! q}\left(\frac{H_{\nu}}{1 \!+\! \nu}\right)^{\frac{2 \!-\! q}{1 \!+\! \nu \!-\! q}}\left(\frac{1 \!-\! \nu}{\delta(2 \!-\! q)}\right)^{\frac{1 \!-\! \nu}{1 \!+\! \nu \!-\! q}},
\end{align*}
and this is our statement. \red{ Note that if $\nu > 0$, then we have $\partial F(x) = \lbrace \nabla F(x) \rbrace$ for all $x$ and thus $F$ is differentiable. Indeed, letting $y=x$ in \eqref{eq:hold} we get:
    $g(x) = \Bar{g}(x)$.
This implies that the set $\partial F(x)$ has a single element, thus $F$ is differentiable}. This example covers large classes of functions. Indeed, when $\nu = 1$, we get functions with Lipschitz-continuous subgradient. For $\nu < 1$, we get a weaker level of smoothness. In particular, when $\nu = 0$, we obtain functions whose subgradients have bounded variation. Clearly, the latter class includes functions whose subgradients are uniformly bounded by $M$ (just take $H_0 = 2M$). It also covers functions smoothed by local averaging and Moreau–Yosida regularization (see \cite{DeGlNe:13} for more details).  We believe that the readers may find other examples that satisfy our Definition \ref{eq:def1} of an inexact first-order  oracle of degree $q$.
\end{example}


\section{Inexact proximal gradient method}
\noindent In this section, we introduce an inexact proximal gradient  method based on the previous inexact oracle definition for solving (non)convex composite minimization problems \eqref{eq:opt_prblm}.  We derive complexity estimates for this algorithm  and study the dependence between the accuracy of the oracle and the desired accuracy of the gradient or of the objective function. Hence, we consider the following Inexact Proximal Gradient Method (I-PGM).
\begin{algorithm}\label{algo1}
\caption{Inexact proximal gradient method (I-PGM)}
\begin{enumerate}
  \item[1.] Given $x_0 \in \text{dom}\; h$ and $0\leq q < 2$.
   \item[] For $k\geq 0$ do:
  \item[2.] Choose $\delta_k$, $L_k$ and 
        $\alpha_k$. Obtain $g_{\delta_k,L_k,q}(x_k)$.
  \item[3.] Compute $x_{k+1} = \text{prox}_{\alpha_k
h}\left(x_{k}-\alpha_k g_{\delta_k,L_k,q}(x_k)\right)$.
\end{enumerate}
\end{algorithm}
\noindent Note that Algorithm 1 is an inexact proximal gradient method, where the inexactness comes from the approximate computation of the (sub)gradient of $F$, denoted $g_{\delta_k,L_k,q}(x_k)$. In the next sections we analyze the convergence behavior of this algorithm when $g_{\delta_k,L_k,q}(x_k)$ satisfies Definition \ref{eq:def1}.


\subsection{Nonconvex convergence analysis}
\noindent In this section we consider a nonconvex function $F$ that admits an inexact first-order $(\delta,L)$-oracle of degree $q$ as in Definition \ref{eq:def1}. Using this definition and inequality \eqref{eq:apd1}, for all $\rho>0$ we get the following upper bound:
\begin{align}\label{eq:inx3}
F(x) - \Big(F(y) + \langle g_{\delta,L,q}(y),x-y \rangle\Big)\leq \frac{L + q\rho}{2}\norm{x-y}^{2} + \frac{(2-q)\delta^{\frac{2}{2-q}}}{2\rho^{\frac{q}{2-q}}}.
\end{align}
This inequality will play a key role in our convergence analysis. We define the \textit{gradient mapping} at iteration $k$ as $g_{\delta_k,L_k,q}(x_k) + p_{k+1}$, where $p_{k+1}\in\partial h(x_{k+1})$ such that $ g_k + p_{k+1} = -\frac{1}{\alpha_k}(x_{k+1} - x_k)$ (i.e., $p_{k+1}$ is the subgradient of $h$ at $x_{k+1}$ coming from the optimality condition of the prox at $x_k$). Next we analyze the global convergence of I-PGM in the norm of the gradient mapping. We have the following theorem:


\begin{theorem}\label{th:2}
Let $F$ be a nonconvex function admitting a $(\delta_k,L_k)$-oracle of degree $q\in [0,2)$ at each iteration $k$, \red{with $\delta_k \geq 0$ and $L_k > 0$ for all $k\geq 0$}. Let $(x_k)_{k\geq 0}$ be generated by I-PGM and assume that $\alpha_k \leq \frac{1}{L_k + q\rho}$, for some arbitrary parameter $\rho >0$. Then,  there exists $p_{k+1} \in\partial h(x_{k+1})$ such that:
\begin{align}\label{eq:th}
\sum_{j=0}^{k}\alpha_j \norm{g_{\delta_{j},L_{j},q}(x_j) + p_{j+1} }^2\leq f(x_0) - f_\infty + \frac{\sum_{j=0}^{k}(2-q)\delta_j^{\frac{2}{2-q}}}{2\rho^{\frac{q}{2-q}}}.
\end{align}  
\end{theorem}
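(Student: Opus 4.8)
The plan is to run the standard descent-lemma argument for proximal gradient, but with the inexact oracle inequality \eqref{eq:inx3} playing the role of the usual descent lemma, and then telescope. First I would apply \eqref{eq:inx3} at $y = x_k$ and $x = x_{k+1}$, which gives
\begin{align*}
F(x_{k+1}) \leq F(x_k) + \langle g_{\delta_k,L_k,q}(x_k), x_{k+1}-x_k\rangle + \frac{L_k + q\rho}{2}\norm{x_{k+1}-x_k}^2 + \frac{(2-q)\delta_k^{\frac{2}{2-q}}}{2\rho^{\frac{q}{2-q}}}.
\end{align*}
Next I would add $h(x_{k+1})$ to both sides and use the optimality condition of the prox step: since $x_{k+1} = \text{prox}_{\alpha_k h}(x_k - \alpha_k g_k)$, there is $p_{k+1}\in\partial h(x_{k+1})$ with $g_k + p_{k+1} = -\frac{1}{\alpha_k}(x_{k+1}-x_k)$. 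Convexity of $h$ then gives $h(x_{k+1}) \leq h(x_k) + \langle p_{k+1}, x_{k+1}-x_k\rangle$ — here is where I use that $h$ is convex and that $p_{k+1}$ is the specific subgradient coming from the prox. Hmm, wait: actually I need $h(x_k) \geq h(x_{k+1}) + \langle p_{k+1}, x_k - x_{k+1}\rangle$, i.e. $h(x_{k+1}) \leq h(x_k) + \langle p_{k+1}, x_{k+1} - x_k\rangle$, which is exactly the subgradient inequality for $h$ at $x_{k+1}$. Good.

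Combining, the cross terms assemble into $\langle g_k + p_{k+1}, x_{k+1}-x_k\rangle = -\alpha_k \norm{g_k + p_{k+1}}^2$, and $\norm{x_{k+1}-x_k}^2 = \alpha_k^2\norm{g_k+p_{k+1}}^2$, so
\begin{align*}
f(x_{k+1}) \leq f(x_k) - \alpha_k\Big(1 - \frac{(L_k+q\rho)\alpha_k}{2}\Big)\norm{g_k+p_{k+1}}^2 + \frac{(2-q)\delta_k^{\frac{2}{2-q}}}{2\rho^{\frac{q}{2-q}}}.
\end{align*}
The step-size hypothesis $\alpha_k \leq \frac{1}{L_k+q\rho}$ yields $1 - \frac{(L_k+q\rho)\alpha_k}{2} \geq \frac12$, hence $\alpha_k\big(1-\frac{(L_k+q\rho)\alpha_k}{2}\big)\norm{g_k+p_{k+1}}^2 \geq \frac{\alpha_k}{2}\norm{g_k+p_{k+1}}^2$ — actually, to land exactly on \eqref{eq:th} with coefficient $1$ rather than $\frac12$ on the left-hand side, I should instead keep the bound as $f(x_{k+1}) \leq f(x_k) - \alpha_k \norm{g_k+p_{k+1}}^2 \cdot \frac12 + \dots$ and note the claimed inequality; let me re-examine the constant. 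Rearranging gives $\alpha_k\big(1 - \tfrac{(L_k+q\rho)\alpha_k}{2}\big)\norm{g_k+p_{k+1}}^2 \le f(x_k) - f(x_{k+1}) + \frac{(2-q)\delta_k^{2/(2-q)}}{2\rho^{q/(2-q)}}$; since $\alpha_k(L_k+q\rho)\le 1$ we have $1 - \frac{(L_k+q\rho)\alpha_k}{2} \ge \frac12$, and this only gives $\frac{\alpha_k}{2}\norm{\cdot}^2$ on the left. To get the stated bound with $\alpha_j$ (not $\alpha_j/2$) one presumably uses the sharper reading that the left side of \eqref{eq:th} is bounded; I would double-check whether the intended hypothesis is $\alpha_k \le \frac{1}{L_k+q\rho}$ giving factor $\ge 1/2$, in which case the constant in \eqref{eq:th} absorbs a factor $2$, or whether a tighter step-size is meant. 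I will state the descent inequality in the form that matches \eqref{eq:th} and flag the constant.

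Finally I would sum the per-iteration inequality over $j = 0, 1, \dots, k$; the $f(x_j)$ terms telescope to $f(x_0) - f(x_{k+1})$, and using $f(x_{k+1}) \geq f_\infty$ gives exactly the right-hand side $f(x_0) - f_\infty + \frac{\sum_{j=0}^k (2-q)\delta_j^{2/(2-q)}}{2\rho^{q/(2-q)}}$. The existence of the $p_{j+1}$ at every step is guaranteed by the optimality of the prox and nonemptiness of $\partial h$ on $\text{dom}\,h$, and the last $p_{k+1}$ is the one asserted in the statement. The only genuine subtlety — the main obstacle — is keeping the bookkeeping of the coefficient $\alpha_k(1 - \tfrac12(L_k+q\rho)\alpha_k)$ consistent with the clean factor $\alpha_k$ appearing in \eqref{eq:th}; everything else is the routine descent-lemma-plus-telescoping pattern, with inequality \eqref{eq:inx3} substituting for the exact descent lemma and the convexity of $h$ handling the composite term.
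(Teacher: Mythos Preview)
Your approach matches the paper's proof exactly: apply \eqref{eq:inx3} at $(x_k,x_{k+1})$, use the prox optimality condition $g_k + p_{k+1} = -\tfrac{1}{\alpha_k}(x_{k+1}-x_k)$ together with convexity of $h$, obtain the per-step descent inequality, and telescope using $f(x_{k+1})\ge f_\infty$. Your worry about the constant is justified and is not an error on your part: the paper's own proof also lands on $\sum_{j} \tfrac{\alpha_j}{2}\|g_j+p_{j+1}\|^2$ on the left-hand side (not $\sum_j \alpha_j$), and the downstream application in Theorem~\ref{th:1} is consistent with the $\alpha_j/2$ version---so the missing factor $\tfrac12$ in the displayed statement \eqref{eq:th} is a typo in the paper.
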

\begin{proof}
Denote $g_{\delta_k,L_k,q}(x_k)= g_k$. From the optimality conditions of the proximal operator defining $x_{k+1}$, we have:
\begin{align*}
g_k + p_{k+1} = -\frac{1}{\alpha_k} (x_{k+1} - x_k) .
\end{align*} 
Further, from inequality \eqref{eq:inx3}, we get:
\begin{align*}
&F(x_{k+1}) \leq F(x_k) + \langle g_k,x_{k+1} - x_k \rangle + \frac{L_k + q\rho}{2}\norm{x_{k+1} - x_k}^2 + \frac{(2-q)\delta_k^{\frac{2}{2-q}}}{2\rho^{\frac{q}{2-q}}}\\
           & = F(x_k) \!+\! \langle g_k \!+\! p_{k+1},x_{k+1} \!-\! x_k \rangle  \!-\! \langle p_{k+1},x_{k+1} \!-\! x_k \rangle \!+\!  \frac{L_k + q\rho}{2}\norm{x_{k+1} \!-\! x_k}^2\\
           		   &\quad + \frac{(2-q)\delta_k^{\frac{2}{2\!-\!q}}}{2\rho^{\frac{q}{2-q}}}\\
		   & \leq F(x_k)  \!-\!\alpha_k \left( 1 - \frac{(L_k \!+\! q\rho)\alpha_k }{2}\right) \| g_k \!+\! p_{k+1} \|^2 \!+\! h(x_k) \!-\! h(x_{k+1}) \!+\! \frac{(2\!-\!q)\delta_k^{\frac{2}{2-q}}}{2\rho^{\frac{q}{2-q}}}\\
		   &\leq F(x_k) -\frac{\alpha_k}{2} \| g_k + p_{k+1}\|^2 + h(x_k) - h(x_{k+1})  + \frac{(2\!-\!q)\delta_k^{\frac{2}{2-q}}}{2\rho^{\frac{q}{2-q}}},  
\end{align*}
where the second inequality follows from the convexity of $h$ and $p_{k+1}\in\partial h(x_{k+1})$, and the last inequality follows from the definition of $\alpha_k$. Hence, we get that:
\begin{align*}
f(x_{k+1})\leq f(x_k) - \frac{\alpha_k}{2} \norm{g_k \!+\! p_{k+1}}^2 + \frac{(2-q)\delta_k^{\frac{2}{2-q}}}{2\rho^{\frac{q}{2-q}}}.
\end{align*}
Summing up this inequality from $j=0$ to $j=k$ \red{and using the fact that $f(x_{k+1}) \geq f_\infty$, where recall that  $f_\infty$ denotes a finite lower bound for the objective function}, we get:
\begin{align*}
\sum_{j=0}^{k} \frac{\alpha_j}{2} \norm{g_j + p_{j+1}}^2 &\leq f(x_0) - f(x_{k+1}) + \frac{\sum_{j=0}^{k}(2-q)\delta_j^{\frac{2}{2-q}}}{2\rho^{\frac{q}{2-q}}}\\
            & \leq f(x_0) - f_\infty + \frac{\sum_{j=0}^{k}(2-q)\delta_j^{\frac{2}{2-q}}}{2\rho^{\frac{q}{2-q}}}.
\end{align*}
Hence, our statement follows.
\end{proof}

\noindent For a particular choice of the algorithm parameters, we can get simpler convergence estimates. 

\begin{theorem}\label{th:1}
Let the assumptions of Theorem \ref{th:2} hold and consider for all $k \geq 0$: 
$$ L_k = L,\; \delta_k = \frac{\delta}{(k+1)^{\frac{\beta(2-q)}{2}}},\;\alpha_k = \frac{1}{(L + q\rho)(k+1)^{\zeta}}, \;\emph{where}\; \beta, \zeta \in[0,1).$$ 
 Then, we have:
\begin{align}\label{eq:th1}
&\min_{j=0:k} \norm{g_j + p_{j+1}}^2 \leq \frac{2(L \!+\! q\rho)(f(x_0) \!-\! f_\infty)}{(1 \!-\! \zeta)(k+1)^{1-\zeta}} \!+\! \frac{(2\!-\!q)(L \!+\! q\rho)\delta^{\frac{2}{2-q}} }{(1-\zeta)(1-\beta)\rho^{\frac{q}{2-q}} (k+1)^{\beta-\zeta}}.
\end{align}
\end{theorem}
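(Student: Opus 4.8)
The plan is to start from the already-established per-iteration descent inequality proved inside Theorem~\ref{th:2}, namely
\begin{align*}
\sum_{j=0}^{k}\alpha_j \norm{g_j + p_{j+1}}^2\leq 2\left(f(x_0) - f_\infty\right) + \frac{\sum_{j=0}^{k}(2-q)\delta_j^{\frac{2}{2-q}}}{\rho^{\frac{q}{2-q}}},
\end{align*}
and then substitute the specific parameter choices $L_k=L$, $\alpha_k = \frac{1}{(L+q\rho)(k+1)^{\zeta}}$ and $\delta_k = \delta (k+1)^{-\beta(2-q)/2}$, which automatically respects the step-size restriction $\alpha_k\le \frac{1}{L_k+q\rho}$ since $(k+1)^{\zeta}\ge 1$. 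The term $\delta_j^{2/(2-q)}$ then simplifies to $\delta^{2/(2-q)}(j+1)^{-\beta}$, so the right-hand side becomes $2(f(x_0)-f_\infty) + \frac{(2-q)\delta^{2/(2-q)}}{\rho^{q/(2-q)}}\sum_{j=0}^{k}(j+1)^{-\beta}$.

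Next I would lower-bound the left-hand side by pulling out the minimum: $\sum_{j=0}^{k}\alpha_j\norm{g_j+p_{j+1}}^2 \ge \left(\min_{j=0:k}\norm{g_j+p_{j+1}}^2\right)\sum_{j=0}^{k}\alpha_j$, and then bound $\sum_{j=0}^{k}\alpha_j = \frac{1}{L+q\rho}\sum_{j=0}^{k}(j+1)^{-\zeta}$ from below. The standard integral comparison gives $\sum_{j=0}^{k}(j+1)^{-\zeta}\ge \int_0^{k+1} t^{-\zeta}\,dt = \frac{(k+1)^{1-\zeta}}{1-\zeta}$, valid precisely because $\zeta\in[0,1)$. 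Similarly, for the error term I would upper-bound $\sum_{j=0}^{k}(j+1)^{-\beta}\le \int_0^{k+1} t^{-\beta}\,dt = \frac{(k+1)^{1-\beta}}{1-\beta}$, using $\beta\in[0,1)$.

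Combining these, I divide through by the lower bound on $\sum_{j=0}^k\alpha_j$, i.e. multiply the right-hand side by $\frac{(L+q\rho)(1-\zeta)}{(k+1)^{1-\zeta}}$, to obtain
\begin{align*}
\min_{j=0:k}\norm{g_j+p_{j+1}}^2 \le \frac{2(L+q\rho)(f(x_0)-f_\infty)}{(1-\zeta)(k+1)^{1-\zeta}} + \frac{(2-q)(L+q\rho)\delta^{2/(2-q)}}{(1-\zeta)(1-\beta)\rho^{q/(2-q)}}\cdot\frac{(k+1)^{1-\beta}}{(k+1)^{1-\zeta}},
\end{align*}
and the last fraction is exactly $(k+1)^{\zeta-\beta} = (k+1)^{-(\beta-\zeta)}$, which matches \eqref{eq:th1}. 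The argument is essentially bookkeeping once Theorem~\ref{th:2} is in hand; the only genuinely substantive points are checking that the chosen $\alpha_k$ satisfies the hypothesis of Theorem~\ref{th:2} and getting the direction of each integral comparison correct (lower bound for the step-size sum, upper bound for the error sum). There is no real obstacle — the main thing to be careful about is consistency of constants, in particular tracking the factor $2$ and the $(2-q)$ coefficient through the division, and noting that the bound is only meaningful (decaying) when $\zeta<1$ and $\beta>\zeta$.
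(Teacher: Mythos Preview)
Your overall strategy --- extract the minimum from the weighted sum in Theorem~\ref{th:2}, substitute the specific $\alpha_k$ and $\delta_k$, and control the resulting sums by integral comparison --- is exactly what the paper does. The one genuine error is the direction of your lower bound on the step-size sum. Since $t\mapsto t^{-\zeta}$ is decreasing on $(0,\infty)$, one has $(j+1)^{-\zeta}\le\int_j^{j+1}t^{-\zeta}\,dt$ for every $j\ge 0$, and summing yields
\[
\sum_{j=0}^{k}(j+1)^{-\zeta}\ \le\ \int_0^{k+1}t^{-\zeta}\,dt=\frac{(k+1)^{1-\zeta}}{1-\zeta},
\]
the reverse of what you claim (a quick sanity check: for $k=0$, $\zeta=\tfrac12$ the sum equals $1$ while the integral equals $2$). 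A valid lower bound is obtained by shifting the comparison interval, $\sum_{j=1}^{k+1}j^{-\zeta}\ge\int_1^{k+2}t^{-\zeta}\,dt=\frac{(k+2)^{1-\zeta}-1}{1-\zeta}$, which the paper then further weakens to $(1-\zeta)(k+1)^{1-\zeta}$.

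With this correct lower bound the factor you divide by is $\frac{(1-\zeta)(k+1)^{1-\zeta}}{L+q\rho}$, so its reciprocal places $(1-\zeta)$ in the \emph{denominator} of the final estimate, as in \eqref{eq:th1}. Your write-up is in fact internally inconsistent on this point: the multiplication factor you announce, $\frac{(L+q\rho)(1-\zeta)}{(k+1)^{1-\zeta}}$, is the reciprocal of your (false) lower bound and would put $(1-\zeta)$ in the numerator of both terms, yet your displayed final expression has it in the denominator. So the target bound is reached only because two slips happen to cancel; as written, the argument does not actually establish \eqref{eq:th1}. Your upper bound $\sum_{j=0}^k(j+1)^{-\beta}\le\frac{(k+1)^{1-\beta}}{1-\beta}$, on the other hand, is correct and is essentially the one the paper uses.
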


\begin{proof}

Taking the minimum in the inequality \eqref{eq:th}, we get:
\begin{align*}
\min_{j=0:k} \norm{g_j + p_{j+1}}^2 &\leq \frac{2(f(x_0) - f_\infty)}{\sum_{j=0}^{k} \alpha_j} + \frac{\sum_{j=0}^{k}(2-q)\delta_j^{\frac{2}{2-q}}}{\rho^{\frac{q}{2-q}}\sum_{j=0}^{k-1}\alpha_j}.
\end{align*}
Further, since we have:
\begin{align*}
 \sum_{j=0}^{k} \frac{1}{(L+q\rho)(j+1)^\zeta} = \sum_{j=1}^{k+1} \frac{1}{(L+q\rho)j^\zeta},    
\end{align*}
and  similarly for $\delta_j$, we get:
\begin{align*}
\min_{j=0:k} \norm{g_j + p_{j+1}}^2 & \leq \frac{2(L + q\rho)(f(x_0) - f_\infty)}{\sum_{j=1}^{k+1} \frac{1}{j^\zeta}} + \frac{(2 - q)(L + q\rho) \delta^{\frac{2}{2-q}} \sum_{j=1}^{k+1} \frac{1}{j^\beta}}{\rho^{\frac{q}{2-q}} \sum_{j=1}^{k+1} \frac{1}{j^\zeta}}. 
\end{align*}
Since $0 \leq \zeta <1$, then we have for all $k\geq 0$:
\begin{align*}
(1-\zeta)(k+1)^{1-\zeta}\leq\frac{(k+2)^{1-\zeta} - 1}{1-\zeta} &= \int_{1}^{k+2}\frac{1}{u^{\zeta}} du\\
&\leq \sum_{j=1}^{k+1} \frac{1}{j^\zeta}
\leq  \int_{1}^{k+1} \left(\frac{1}{u^\zeta} \right)du + 1 \leq  \frac{(k+1)^{1-\zeta}}{1-\zeta}.   
\end{align*}
It follows that for all $k\geq 0$:
\begin{align*}
\min_{j=0:k} \norm{g_j + p_{j+1}}^2 &\leq \frac{2(L \!+\! q\rho)(f(x_0) \!-\! f_\infty)}{(1 \!-\! \zeta)(k+1)^{1-\zeta}} +\frac{(2\!-\!q)(L \!+\! q\rho)\delta^{\frac{2}{2-q}} }{(1-\zeta)(1-\beta)\rho^{\frac{q}{2-q}} (k+1)^{\beta-\zeta}}.
\end{align*}
Hence, our statement follows.
\end{proof}

\noindent Let us analyze in more details the bound from Theorem \ref{th:1}. For  simplicity, consider the case $q = 1$ (see Example \ref{eq:ex1}). Then,  we have:

 \begin{align*}
\min_{j=0:k} \norm{g_j + p_{j+1}}^2 &\leq \frac{2(L + \rho)(f(x_0) - f_\infty)}{(1-\zeta)(k+1)^{1 - \zeta}} + \frac{(L + \rho)\delta^2}{\rho(1-\zeta)(1-\beta) (k+1)^{\beta-\zeta}}\\
& = \frac{2L(f(x_0) - f_\infty)}{(1-\zeta)(k+1)^{1 - \zeta}} + \frac{2\rho(f(x_0) - f_\infty)}{(1-\zeta)(k+1)^{1 - \zeta}} \\
&\quad + \frac{L \delta^2}{\rho(1-\zeta)(1-\beta)(k+1)^{\beta-\zeta}} + \frac{\delta^2}{ (1-\zeta)(1-\beta)(k+1)^{\beta-\zeta}}. 
\end{align*}
Denote $\Delta _0:= f(x_0) - f_\infty$. Since parameter $\rho > 0$ is a degree of freedom,  minimizing the right hand side of the previous relation w.r.t. $\rho$ we get an optimal choice  $\rho = \frac{\delta\sqrt{L}}{\sqrt{2\Delta_0(1-\beta)}}(k+1)^{\frac{1 - \beta}{2}}$. Hence, replacing this expression for $\rho$ in the last inequality, we get:
\begin{align*}
 \min_{j=0:k} \norm{g_j + p_{j+1}}^2 &\leq \frac{2L\Delta_0}{(1-\zeta)(k+1)^{1-\zeta}}  + \frac{2\delta\sqrt{2L\Delta_0}}{((1-\zeta)\sqrt{1-\beta})(k+1)^{\frac{1+\beta}{2} - \zeta}}\\
 &+ \frac{\delta^2}{(1-\zeta)(1-\beta)(k+1)^{\beta-\zeta}}.
\end{align*}
This bound is of order $\mathcal{O}\left(\frac{1}{k^{1-\zeta}} + \frac{\delta}{k^{\frac{1+\beta}{2} - \zeta}} + \frac{\delta^2}{k^{\beta-\zeta}}\right)$. Note that, if $\beta > \zeta$, the gradient mapping  $\min_{j=0:k} \norm{g_j + p_{j+1}}^2$ converges  regardless of the accuracy of the oracle  $\delta$ and the convergence rate is of order $\mathcal{O}(k^{-\text{min}(1-\zeta,\beta - \zeta)})$ (since we always have $\frac{1+\beta}{2} - \zeta \geq \beta - \zeta$).  Note that this is not the case for $q=0$, where the convergence rate is of order $\mathcal{O}\left(\frac{1}{k} + \delta\right) $, see also \cite{Dvur:17}.  The following corollary provides a convergence rate for general $q$, but for a particular choice of the parameters $\zeta$ and $\beta$.


\begin{corollary}\label{eq:col1}
Let the assumptions of Theorem \ref{th:1} hold and let assume that $\zeta = \beta = 0$. Then, we have the following convergence rates: 
\begin{enumerate}
\item If $0\leq q < 2$ and $\rho = L$, then $\delta_k = \delta$, $\alpha_k = \frac{1}{L+qL}$ and
\begin{align*}
&\min_{j=0:k} \norm{g_j + p_{j+1}}^2 \leq \frac{2(q+1)L\Delta_0}{k+1} +(q+1)(2-q)L^{\frac{2-2q}{2-q}}\delta^{\frac{2}{2-q}} \quad \forall k \geq 0. 
\end{align*}

\item If $1\leq q < 2$, fixing the number of iterations $k$ and taking  $\rho  = \frac{L^{\frac{2-q}{2}}\delta}{(2\Delta_0)^{\frac{2-q}{2}}}(k+1)^{\frac{2-q}{2}}$, then $\delta_j = \delta$, $\alpha_j =\frac{1}{L + q\rho}$ for all $j=0:k$ and
\begin{align*}
&\min_{j=0:k} \norm{g_j + p_{j+1}}^2\\
&\leq \frac{2L\Delta_0}{k+1}\! +\! \frac{L^{\frac{2-q}{2}}(2\Delta_0)^{\frac{q}{2}}\delta\! +\! (2-q)\delta L^{1-\frac{q}{2}}(2\Delta_0)^{\frac{q}{2}}}{(k+1)^{\frac{q}{2}}}\!+\! \frac{q(2-q)\delta^2 L^{1-q}(2\Delta_0)^{q-1}}{(k+1)^{q-1}}.
\end{align*}
\end{enumerate}
\end{corollary}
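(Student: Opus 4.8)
The plan is to specialize Theorem~\ref{th:1} to the case $\zeta=\beta=0$ and then make the two indicated choices of the free parameter $\rho$, simplifying the resulting bound in each case. With $\zeta=\beta=0$ the sequences become $\delta_k=\delta$ and $\alpha_k=\frac{1}{L+q\rho}$ (constant), and the estimate \eqref{eq:th1} collapses to
\begin{align*}
\min_{j=0:k}\norm{g_j+p_{j+1}}^2\leq \frac{2(L+q\rho)\Delta_0}{k+1}+\frac{(2-q)(L+q\rho)\delta^{\frac{2}{2-q}}}{\rho^{\frac{q}{2-q}}},
\end{align*}
where $\Delta_0=f(x_0)-f_\infty$. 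This is the common starting point for both items; the rest is just substitution and algebra.

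For item 1, I would set $\rho=L$. Then $L+q\rho=(q+1)L$, $\alpha_k=\frac{1}{(q+1)L}$, and $\rho^{\frac{q}{2-q}}=L^{\frac{q}{2-q}}$, so the second term becomes $(2-q)(q+1)L\cdot L^{-\frac{q}{2-q}}\delta^{\frac{2}{2-q}}=(q+1)(2-q)L^{1-\frac{q}{2-q}}\delta^{\frac{2}{2-q}}$; checking the exponent, $1-\frac{q}{2-q}=\frac{2-2q}{2-q}$, which matches the claimed $L^{\frac{2-2q}{2-q}}$. The first term is plainly $\frac{2(q+1)L\Delta_0}{k+1}$, giving exactly the stated inequality, valid for all $k\geq 0$.

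For item 2, I would plug in $\rho=\frac{L^{\frac{2-q}{2}}\delta}{(2\Delta_0)^{\frac{2-q}{2}}}(k+1)^{\frac{2-q}{2}}$, which is the (k-dependent) $\rho$ one obtains by balancing the two $\rho$-dependent pieces of the bound — this is the analogue of the optimization over $\rho$ carried out in the $q=1$ discussion preceding the corollary. Expanding $L+q\rho=L+q\rho$ and distributing over both terms produces four contributions: $\frac{2L\Delta_0}{k+1}$ from $L$ in the first term; $\frac{2q\rho\Delta_0}{k+1}$ from $q\rho$ in the first term; $\frac{(2-q)L\delta^{\frac{2}{2-q}}}{\rho^{\frac{q}{2-q}}}$ from $L$ in the second term; and $\frac{q(2-q)\rho^{1-\frac{q}{2-q}}\delta^{\frac{2}{2-q}}}{1}$ from $q\rho$ in the second term, where $1-\frac{q}{2-q}=\frac{2-2q}{2-q}$. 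Substituting the chosen $\rho$ and simplifying the powers of $L$, $\Delta_0$, $\delta$ and $(k+1)$ should collapse the first two contributions into $\frac{2L\Delta_0}{k+1}+\frac{L^{\frac{2-q}{2}}(2\Delta_0)^{\frac{q}{2}}\delta}{(k+1)^{\frac{q}{2}}}$ and the last two into $\frac{(2-q)L^{1-\frac{q}{2}}(2\Delta_0)^{\frac{q}{2}}\delta}{(k+1)^{\frac{q}{2}}}+\frac{q(2-q)L^{1-q}(2\Delta_0)^{q-1}\delta^2}{(k+1)^{q-1}}$, matching the claimed bound after grouping the two $(k+1)^{-q/2}$ terms.

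The only real obstacle is bookkeeping: one must carefully track the fractional exponents of $L$, $\Delta_0$, $\delta$ and $(k+1)$ through the substitution of $\rho$, in particular verifying the identities $\frac{2}{2-q}\cdot\frac{2-q}{2}=1$ and $1-\frac{q}{2-q}=\frac{2-2q}{2-q}$ and $\frac{q}{2-q}\cdot\frac{2-q}{2}=\frac{q}{2}$, so that the powers of $(k+1)$ come out as $\frac{q}{2}$ and $q-1$ respectively. The restriction $1\leq q<2$ in item 2 is exactly what keeps the exponent $q-1\geq 0$ (so that the last term genuinely decays, or at worst stays bounded), and ensures $\rho>0$ is well-defined; no monotonicity or convexity argument beyond Theorem~\ref{th:1} is needed, since all the work has already been done there.
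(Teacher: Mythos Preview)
Your proposal is correct and follows essentially the same route as the paper: both specialize \eqref{eq:th1} to $\zeta=\beta=0$, expand $(L+q\rho)$ into the four additive contributions you list, then substitute $\rho=L$ for item~1 and the stated $k$-dependent $\rho$ for item~2 (which the paper, like you, motivates as the minimizer of the second and third terms). The remaining work in both cases is exactly the exponent bookkeeping you describe.
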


\begin{proof}
Replacing $\zeta = \beta = 0$ in inequality \eqref{eq:th1}, we get: 
\begin{align*}
\min_{j=0:k} \norm{g_j + p_{j+1}}^2 &\leq \frac{2(L + q\rho)\Delta_0}{k+1} + \frac{(2-q)(L + q\rho)\delta^{\frac{2}{2-q}}}{\rho^{\frac{q}{2-q}}}\\\nonumber
						    & = \frac{2L\Delta_0}{k+1} + \frac{2q\rho\Delta_0}{k+1} + \frac{(2-q)L\delta^{\frac{2}{2-q}}}{\rho^{\frac{q}{2-q}}} + \frac{q(2-q) \delta^{\frac{2}{2-q}}}{\rho^{\frac{2q-2}{2-q}}}.
\end{align*}
\red{If $0\leq q <2$}, then taking $\rho = L $ in the last inequality we get the first statement. Further, if $1\leq q <2$, minimizing over $\rho$ the second and the third terms of the right side of the last inequality yields the optimal choice $\rho = \frac{L^{\frac{2-q}{2}}\delta}{(2\Delta _0)^{\frac{2-q}{2}}}(k+1)^{\frac{2-q}{2}}$. Replacing this expression for $\rho$ in the last inequality, we get:
\begin{align*}
&\min_{j=0:k} \norm{g_j + p_{j+1}}^2\\
&\leq  \frac{2L\Delta_0}{k+1}\! +\! \frac{L^{\frac{2-q}{2}}(2\Delta_0)^{\frac{q}{2}}\delta\! +\! (2-q)\delta L^{1-\frac{q}{2}}(2\Delta_0)^{\frac{q}{2}}}{(k+1)^{\frac{q}{2}}}\!+\! \frac{q(2-q)\delta^2 L^{1-q}(2\Delta_0)^{q-1}}{(k+1)^{q-1}},
\end{align*}
and this is the second statement.
\end{proof}

\begin{remark}
\red{Let us analyse in more details this convergence rate for Example \ref{eq:ex1}. For $q = 0$, we have that $\delta = 2D\Delta$ and $L = L_F$, where $D$ is the diameter of $\text{dom}f$. Hence, the convergence rate in this case becomes:
\begin{align*}
 \min_{j=0:k} \norm{g_j + p_{j+1}}^2 \leq  \frac{4L_F \Delta_0}{k+1} + 4DL_F\Delta.    
\end{align*}
On the other hand, for $q=1$, we have $\delta = \Delta$ and $L = L_F$. Thus, we get the following convergence rate:
\begin{align*}
 &\min_{j=0:k} \norm{g_j + p_{j+1}}^2 \leq \frac{4 L_F\Delta_0}{k+1} +2\Delta^2.    
\end{align*}
Hence, if we want to achieve $\min_{j=0:k} \norm{g_j + p_{j+1}}^2 \leq \epsilon$, for $q=0$ we impose $ 4DL_F\Delta \leq \epsilon/2$, which implies that one needs to compute an approximate gradient with accuracy $\Delta = \mathcal{O}(\epsilon)$, while for $q=1$ we impose $ 2\Delta^2 \leq \epsilon/2$, meaning that one only needs to compute an approximate gradient with accuracy $ \Delta =  \mathcal{O}(\epsilon^{1/2})$. Hence, for this example, it is more natural to use our inexact first-order oracle definition for $q=1$ than for $q=0$, since it requires less accuracy for approximating the true gradient.}    
\end{remark}

\noindent  { Note that in the second result of Corollary \ref{eq:col1}, the parameter $\rho$ depends on the difference $\Delta_0 = f(x_0) - f_\infty$, and, usually, $f_\infty$ is unknown. In practice, we can approximate $\Delta_0$ by using an estimate for $f_\infty$ in place of its exact value.  For example, one can consider $\Delta_0^k = f(x_0) - f_\text{best}^k$, where $f_\text{best}^k = \min_{j=0:k} f(x_j) - \varepsilon_k$ for some $\varepsilon_k \geq 0$, see \cite{GeBe:99}. Under this setting, the sequence $\varepsilon_k$ and the iterates of I-PGM corresponding to the case of the second result of Corollary \ref{eq:col1} are updated as follows:
\begin{algorithm}
\caption{Adaptive I-PGM algorithm when $f_\infty$ is unknown}
\begin{enumerate}
\item Given $\varepsilon_0 >0$ and $f_\text{best}^0 = f(x_0) - \varepsilon_0$. \\
For $k\geq 0$ do:
\item Compute $x_{k+1}$ by I-PGM with $\Delta_0^k = f(x_0) - f_\text{best}^k$.\\
\text{While $ f(x_{k+1}) < f_\text{best}^k$}  
\begin{enumerate}
\item Set $\varepsilon_k = 2\times \varepsilon_k$ and update $f_\text{best}^k = \min\limits_{j=0:k} f(x_j) - \varepsilon_k$.
\item Re-compute $x_{k+1}$ by I-PGM method. 
\end{enumerate}
\text{End While}
\item Set $\varepsilon_{k+1} = \frac{\varepsilon_k}{2}$.
\end{enumerate}
\end{algorithm}
}

\noindent { This process is well defined, i.e., the "while" step finishes in a finite number of iterations. Indeed, one can observe that if $\varepsilon_k \geq \min_{j=0:k} f(x_j) - f_\infty$ then $\varepsilon_k \geq \min_{j=0:k} f(x_j) - f(x_{k+1})$, which implies that $f(x_{k+1}) \geq f_\text{best}^k$. Additionally, we have $\varepsilon_{k} \leq 2(\min_{j=0:k} f(x_j) - f_\infty)$  for all $k\geq 0$. Hence, we can still derive a convergence rate for the second result of Corollary \ref{eq:col1} using this adaptive process since one can observe that:
\begin{align*}
f(x_0) - f(x_{k+1}) &\leq f(x_0) - f_\text{best}^k  = \Delta_0^k.
\end{align*}
Additionally, we have the following bound on $\Delta_0^k$:
\begin{align*}
\Delta_0^k &\leq  f(x_0) - \min\limits_{j=0:k}f(x_j) + 2(\min_{j=0:k} f(x_j) - f_\infty)\\
& = (f(x_0) - f_\infty) +  (\min\limits_{j=0:k}f(x_j) - f_\infty). 
\end{align*}
Hence, we can replace in \eqref{eq:th} the difference $\Delta_0 = f(x_0) - f_\infty$ with $\Delta_0^k$ and then the second statement of Corollary \ref{eq:col1} remains valid with $\Delta_0^k$ instead of $\Delta_0$.} 


\begin{remark}
We observe that for $q=0$ we recover the same convergence rate as in \cite{Dvur:17}. However, our result does not require the boundedness of the domain of $f$, while in \cite{Dvur:17} the rate depends explicitly of the diameter of the domain of $f$. Moreover, for $q >0$ our convergence bounds are better than in \cite{Dvur:17}, i.e., the coefficients of the terms in $\delta$ are either smaller or even tend to zero, while in \cite{Dvur:17} they are always constant.
\end{remark}

\noindent Further, let us consider the case of Example \ref{eq:ex4}, where $F$ satisfies the Hölder condition with constant $\nu\in (0,1]$ and $\beta=\zeta=0$. We have shown that for any $\delta >0 $ this class of functions can be equipped with a \red{$(\delta,L)$-oracle} of degree $q< 1+\nu$ with $L= C(H_{\nu},q)\left(\frac{1}{\delta}\right)^{\frac{1-\nu}{1+\nu - q}}$ (see Example \ref{eq:ex4} for the expression of the constant $C(H_\nu,q)$). In view of the first result of Corollary \ref{eq:col1}, after $k$ iterations, we have:
\begin{align*}
\min_{j=0:k} \norm{g_j + p_{j+1}}^2 &\leq 
\frac{2(q+1)\Delta_0 L}{k+1} +(q+1)(2-q)L^{\frac{2-2q}{2-q}}\delta^{\frac{2}{2-q}}\\
& = \frac{C_1}{k+1} \left(\frac{1}{\delta}\right)^{\frac{1-\nu}{1+\nu-q}} + C_2\left(\frac{1}{\delta}\right)^{\frac{(1-\nu)(2-2q)}{(1+\nu-q)(2-q)}}\delta^{\frac{2}{2-q}}\\
& = \frac{C_1}{k+1}\delta^{-\frac{1-\nu}{1+\nu-q}} + C_2 \delta^{-\frac{(1-\nu)(2-2q)}{(1+\nu-q)(2-q)}  + \frac{2}{2-q}}\\
& = \frac{C_1}{k+1}\delta^{-\frac{1-\nu}{1+\nu-q}} + C_2 \delta^{\frac{2\nu}{1+\nu-q}},
\end{align*}
where $C_1: = 2(q+1)\Delta_0 C(H_\nu,q)$ and $C_2 = (q+1)(2-q)C(H_\nu,q)^{\frac{2-2q}{2-q}}$. Since in this example we can choose  $\delta$,  its optimal value  can be computed from the following equation:
\begin{align*}
-\frac{C_1(1 - \nu )}{(1 + \nu - q)}\frac{1}{(k+1)}\delta^{\frac{q-2}{1 + \nu - q}} + \frac{2\nu C_2}{1+\nu -q}\delta^{\frac{-1 + \nu + q}{1 + \nu - q}} = 0.
\end{align*}
Hence, we get:
\begin{align*}
\delta = C_3 (k+1)^{-\frac{1+\nu -q}{1+ \nu }},
\end{align*}
where $C_3 = \left(\frac{2\nu C_2}{(1-\nu)C_1}\right)^{-\frac{1+\nu -q}{1+ \nu }}$. Thus, replacing this optimal  choice of $\delta$ in the last inequality, we get:
\begin{align*}
\min_{j=0:k} \norm{g_j + p_{j+1}}^2 &\leq C_1C_3\left((k+1)^{-\left(1 - \frac{1-\nu}{1 +\nu}\right)}\right) + C_2C_3\left((k+1)^{-\frac{2\nu}{ 1 + \nu } }\right)\\
     & = \frac{C_3(C_1 + C_2)}{(k+1)^{\frac{2\nu}{1 + \nu}}}.
\end{align*}

\begin{remark}
Note that our convergence rate of order $\mathcal{O}(k^{-\frac{2\nu}{1+\nu}})$ for Algorithm 1 (I-PGM) for nonconvex problems having the first term $F$ with a Hölder continuous gradient (Example \ref{eq:ex4}) recovers the rate obtained in \cite{Dvur:17} under the same settings. 
\end{remark}

\noindent Finally, let us now show that when the gradient mapping is small enough, i.e., $\| g_k + p_{k+1} \|$ is small, $x_{k+1}$ is a good approximation for a stationary point of problem \eqref{eq:opt_prblm}. Note that any choice 
$\alpha_k \leq \frac{1}{L + q\rho_k} $ yields:
\begin{align*}
\| x_{k+1} - x_{k} \|\leq \frac{1}{L} \left\Vert \frac{1}{\alpha_k}(x_{k+1} - x_k) \right\Vert = \frac{1}{L}\|g_k + p_{k+1}\|.
\end{align*}
Hence, if the  gradient mapping is small, then the norm of the difference $\| x_{k+1} - x_k \|$ is also small.

\begin{theorem}
Let $(x_{k})_{k\geq 0}$ be generated by I-PGM and let $p_{k+1} \in  \partial h(x_{k+1})$. Assume that we are in the case of Example \ref{eq:ex1}. Then, we have:
\begin{align*}
\emph{dist}(0,\partial f(x_{k+1}))\leq \| g_{\Delta,L_F,q}(x_k) + p_{k+1}\| + L_F \| x_{k+1} - x_k\|  + \Delta .
\end{align*}
Further, if we are in the case of Example \ref{eq:ex4}, then we have:
\begin{align*}
 \emph{dist}(0,\partial f(x_{k+1}))\leq \| g(x_k) + p_{k+1} \| + H_\nu \| x_{k+1} - x_k \|^{\nu}, \;\;  g(x_k)\in\partial F(x_k).
\end{align*}
\end{theorem}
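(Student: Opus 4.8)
The strategy in both cases is the same: identify one explicit element of the limiting subdifferential $\partial f(x_{k+1})$ whose norm is controlled by the gradient mapping plus a small correction, and then use $\text{dist}(0,\partial f(x_{k+1}))\le\|v\|$ for that element $v$. For Example~\ref{eq:ex1}, since $F$ is $L_F$-smooth we have $\partial f(x_{k+1})=\nabla F(x_{k+1})+\partial h(x_{k+1})$, so $\nabla F(x_{k+1})+p_{k+1}\in\partial f(x_{k+1})$ and $\text{dist}(0,\partial f(x_{k+1}))\le\|\nabla F(x_{k+1})+p_{k+1}\|$. I would then add and subtract both the gradient-mapping vector $g_{\Delta,L_F,q}(x_k)+p_{k+1}$ and the exact gradient $\nabla F(x_k)$, and apply the triangle inequality to get
\[
\|\nabla F(x_{k+1})+p_{k+1}\|\le\|g_{\Delta,L_F,q}(x_k)+p_{k+1}\|+\|\nabla F(x_{k+1})-\nabla F(x_k)\|+\|\nabla F(x_k)-g_{\Delta,L_F,q}(x_k)\|.
\]
The middle term is at most $L_F\|x_{k+1}-x_k\|$ by $L_F$-smoothness, and the last term is at most $\Delta$ by the standing assumption \eqref{eq:con_ex1} of Example~\ref{eq:ex1}; summing the three bounds gives the first inequality.

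For Example~\ref{eq:ex4}, the oracle used inside I-PGM is $g(x_k):=g_{\delta_k,L_k,q}(x_k)\in\partial F(x_k)$, and it is precisely this vector that appears in the gradient mapping $g(x_k)+p_{k+1}$. Since $x_{k+1}\in\text{dom}\,h=\text{dom}\,f$, the set $\partial F(x_{k+1})$ is nonempty by assumption; picking any $g(x_{k+1})\in\partial F(x_{k+1})$, the sum rule gives $g(x_{k+1})+p_{k+1}\in\partial f(x_{k+1})$, hence $\text{dist}(0,\partial f(x_{k+1}))\le\|g(x_{k+1})+p_{k+1}\|$. Inserting $\pm g(x_k)$ and using the triangle inequality yields $\|g(x_{k+1})+p_{k+1}\|\le\|g(x_k)+p_{k+1}\|+\|g(x_{k+1})-g(x_k)\|$, and the Hölder condition \eqref{eq:hold} (which holds for \emph{all} elements of the two subdifferentials) bounds the last term by $H_\nu\|x_{k+1}-x_k\|^\nu$, giving the second inequality.

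The only genuinely delicate point is the additivity $\partial f(x_{k+1})\supseteq\partial F(x_{k+1})+\partial h(x_{k+1})$ of the limiting subdifferential in the nonconvex setting, which requires $F$ to be strictly differentiable (or otherwise regular) at $x_{k+1}$ together with convexity/regularity of $h$. This is harmless in Example~\ref{eq:ex1}, where $F$ is smooth, and in Example~\ref{eq:ex4} with $\nu>0$ it follows from the observation made there that the Hölder condition forces $\partial F$ to be single-valued and continuous, i.e. $F\in C^1$. Everything else is routine triangle-inequality bookkeeping once the correct element of $\partial f(x_{k+1})$ has been selected.
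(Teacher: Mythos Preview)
Your proposal is correct and follows essentially the same approach as the paper: pick the explicit element $\nabla F(x_{k+1})+p_{k+1}$ (resp.\ $g(x_{k+1})+p_{k+1}$) of $\partial f(x_{k+1})$, bound its norm via the triangle inequality by inserting $g_{\Delta,L_F,q}(x_k)$ and $\nabla F(x_k)$ (resp.\ $g(x_k)$), and then invoke $L_F$-Lipschitzness plus \eqref{eq:con_ex1} (resp.\ the H\"older condition \eqref{eq:hold}). If anything, you are more explicit than the paper about the subdifferential sum rule and the regularity of $F$ needed to apply it, which the paper simply asserts.
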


\begin{proof}
Let us consider Example \ref{eq:ex1}, where $F$ is $L_F$ smooth and $h$ is convex. Since $\nabla F(x_{k+1}) + p_{k+1} \in\partial f(x_{k+1})$, then we have: 
\begin{align*}
&\| \nabla F(x_{k+1}) + p_{k+1} \|\\ 
& \leq \| g_{\Delta,L_F,q}(x_k) + p_{k+1} \| + \| \nabla F(x_k) - g_{\Delta,L_F,q}(x_k) \| + \| \nabla F(x_{k+1}) - \nabla F(x_k) \|\\
           &\leq  \| g_{\Delta,L_F,q}(x_k) + p_{k+1} \| + \Delta
 + L_F \| x_{k+1} - x_k \|.
\end{align*}
Further, let us assume that we are in the case of Example \ref{eq:ex4}. Then, we have $g(x_k) \in \partial F(x_k)$. Further, let $ g(x_{k+1}) \in \partial F(x_{k+1})$, then we get:
\begin{align*}
\| g(x_{k+1}) + p_{k+1} \|& \leq \| g(x_k) + p_{k+1} \| + \| g(x_{k+1}) - g(x_k) \|\\
           &\leq  \| g(x_k) + p_{k+1} \| + H_\nu \| x_{k+1} - x_k \|^{\nu}.
\end{align*} 
This proves our statements.
\end{proof}

\noindent Thus, for $\|\frac{1}{\alpha_k}(x_{k+1} - x_k) \| = \|g_k + p_{k+1}\|$  small, $x_{k+1}$ is an approximate stationary point of problem \eqref{eq:opt_prblm}.  Note that  our convergence rates from this section are better as   $q$ increases, i.e., the terms depending on $\delta$ are smaller for $q>0$ than for $q=0$. In particular, the power of $\delta$ in the convergence estimate is higher for $q \in (0,1)$ than for $q=0$, while for $q \geq 1$  the coefficients of $\delta$ even diminish with $k$. Hence, it is beneficial to have an inexact first-order  oracle of degree $q > 0$, as this allows us to work with less accurate approximation of the (sub)gradient of the nonconvex function $F$ than for $q = 0$.


\subsection{Convex convergence analysis}
\noindent In this section, we analyze the convergence rate of I-PGM for problem \eqref{eq:opt_prblm}, where $F$ is now assumed to be a convex function. By adding extra information to the oracle \eqref{eq:inx2}, we consider the following modification of Definition \ref{eq:def1}:

\begin{definition}\label{eq:def2}
Given a convex function $F$, then it is equipped with an inexact  first-order \red{$(\delta, L)$-oracle} of degree $0\leq q < 2$ if for any $y \in\text{dom} f$ we can compute a vector $ g_{\delta,L,q}(y)$ such that:

\vspace{-0.4cm}

\begin{align}\label{def:inx-q}
0\!\leq\! F(x) \!-\! \left(F(y) \!+\! \langle g_{\delta,L,q}(y),x\!-\!y \rangle\right) \!\leq\! \frac{L}{2}\| x \!-\! y \|^{2} \!\!+\! \delta\| y\!-\!x \|^q \;\;\;  \forall x \!\in\! \text{dom} f. 
\end{align}
\end{definition}

\vspace{-0.4cm}

\noindent Note that Example \ref{eq:ex4} satisfies this definition. In \eqref{def:inx-q}, the zero-order information is considered to be exact. This is not the case in \cite{DeGlNe:13}, which considers the particular choice $q=0$ . Further, the first-order information $g_{\delta,L,q}$ is a subgradient of $f$ at $y$ in  \eqref{def:inx-q}, while in \cite{DeGlNe:13} it is a $\delta$-subgradient. However, using this inexact first-order oracle of degree $q$, I-PGM provides better rates compared to \cite{DeGlNe:13}. From \eqref{def:inx-q} and  \eqref{eq:apd1}, we get:

\vspace{-0.8cm}

\begin{align}\label{def:inx-q2}
0\leq F(x) \!-\! \left( F(y) \!+\! \langle g_{\delta,L,q}(y),x \!-\! y \rangle \right)\leq \frac{L \!+\! q\rho}{2}\norm{x\!-\!y}^{2} + \frac{(2-q)\delta_{q}^{\frac{2}{2-q}}}{2\rho^{\frac{q}{2-q}}},
\end{align} 

\vspace{-0.2cm}

\noindent for all $\rho >0$. Next, we analyze the convergence rate of I-PGM in the convex setting. We have the following convergence rate:

\begin{corollary}\label{th:nonac_q}
Let $F$ be a convex function admitting a \red{$(\delta,L)$-oracle} of degree $q\in[0,2)$ (see Definition \ref{eq:def2}). Let $(x_k)_{k\geq 0}$ be generated by I-PGM  and assume that $\alpha_k = \frac{1}{L+q\rho}$, with $\rho >0$. Define $\hat{x}_k = \frac{\sum_{i=0}^{k}x_{i+1}}{k+1} $ and $R = \| x_0 - x^* \|$. Then, we have:
\begin{align}\label{eq:rate-cxe}
f(\hat{x}_k) - f^* \leq \frac{(L + q\rho)R^2}{2k} + \frac{(2-q)\delta^{\frac{2}{2-q}}}{2\rho^{\frac{q}{2-q}}}. 
\end{align}

\vspace{-0.5cm}

\end{corollary}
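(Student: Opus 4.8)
The plan is to run the standard proximal-gradient descent argument, but using the inequality \eqref{def:inx-q2} (the convexity-augmented oracle combined with the weighted AM-GM bound) in place of the usual descent lemma, and then convert the per-iteration progress into a bound on $f(\hat x_k)-f^*$ via a telescoping argument on the distances $\|x_k-x^*\|^2$.

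First I would fix an iteration $k$, write $g_k=g_{\delta,L,q}(x_k)$, and use the optimality condition of the prox step, namely $g_k+p_{k+1}=-\frac{1}{\alpha_k}(x_{k+1}-x_k)$ with $p_{k+1}\in\partial h(x_{k+1})$. Applying \eqref{def:inx-q2} at $y=x_k$, $x=x_{k+1}$ with the chosen step $\alpha_k=\frac{1}{L+q\rho}$ gives, exactly as in the proof of Theorem~\ref{th:2},
\begin{align*}
f(x_{k+1})\leq f(x_k)+\langle g_k+p_{k+1},x_{k+1}-x_k\rangle-\tfrac{1}{2\alpha_k}\|x_{k+1}-x_k\|^2+h(x_k)-h(x_{k+1})+\tfrac{(2-q)\delta^{\frac{2}{2-q}}}{2\rho^{\frac{q}{2-q}}}.
\end{align*}
The second ingredient is to compare against the optimum $x^*$: using the lower inequality in \eqref{def:inx-q} (that $g_k$ is a genuine subgradient of $F$ at $x_k$) together with convexity of $h$ and $p_{k+1}\in\partial h(x_{k+1})$, I would bound $f(x_{k+1})-f^*$ by combining $F(x^*)\geq F(x_k)+\langle g_k,x^*-x_k\rangle$ and $h(x^*)\geq h(x_{k+1})+\langle p_{k+1},x^*-x_{k+1}\rangle$, so that
\begin{align*}
f(x_{k+1})-f^*\leq \langle g_k+p_{k+1},x_{k+1}-x^*\rangle-\tfrac{1}{2\alpha_k}\|x_{k+1}-x_k\|^2+\tfrac{(2-q)\delta^{\frac{2}{2-q}}}{2\rho^{\frac{q}{2-q}}}.
\end{align*}

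The key algebraic step is then the familiar identity $\langle -\frac{1}{\alpha_k}(x_{k+1}-x_k),x_{k+1}-x^*\rangle=\frac{1}{2\alpha_k}\big(\|x_k-x^*\|^2-\|x_{k+1}-x^*\|^2-\|x_{k+1}-x_k\|^2\big)$, which turns the right-hand side into $\frac{1}{2\alpha_k}\big(\|x_k-x^*\|^2-\|x_{k+1}-x^*\|^2\big)-\frac{1}{\alpha_k}\|x_{k+1}-x_k\|^2+\frac{(2-q)\delta^{\frac{2}{2-q}}}{2\rho^{\frac{q}{2-q}}}$; discarding the nonpositive middle term yields a clean per-iteration inequality
\begin{align*}
f(x_{k+1})-f^*\leq \tfrac{L+q\rho}{2}\big(\|x_k-x^*\|^2-\|x_{k+1}-x^*\|^2\big)+\tfrac{(2-q)\delta^{\frac{2}{2-q}}}{2\rho^{\frac{q}{2-q}}}.
\end{align*}
Summing from $i=0$ to $k$ telescopes the distance terms down to $\|x_0-x^*\|^2=R^2$, giving $\sum_{i=0}^{k}\big(f(x_{i+1})-f^*\big)\leq \frac{(L+q\rho)R^2}{2}+(k+1)\frac{(2-q)\delta^{\frac{2}{2-q}}}{2\rho^{\frac{q}{2-q}}}$; finally I would invoke Jensen's inequality for the convex function $f$ applied to the average $\hat x_k=\frac{1}{k+1}\sum_{i=0}^k x_{i+1}$ to get $f(\hat x_k)-f^*\leq\frac{1}{k+1}\sum_{i=0}^k(f(x_{i+1})-f^*)$, and dividing through produces $\frac{(L+q\rho)R^2}{2(k+1)}+\frac{(2-q)\delta^{\frac{2}{2-q}}}{2\rho^{\frac{q}{2-q}}}$, which is at most the claimed bound with $k$ in the denominator.

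I do not anticipate a genuine obstacle here — everything is a standard adaptation of the inexact proximal-gradient analysis — but the one point that needs care is making sure the step size condition $\alpha_k=\frac{1}{L+q\rho}$ is exactly what makes the $-\frac{1}{2\alpha_k}\|x_{k+1}-x_k\|^2$ term cancel the $\frac{L+q\rho}{2}\|x_{k+1}-x_k\|^2$ coming from \eqref{def:inx-q2}, leaving a leftover $-\frac{1}{2\alpha_k}\|x_{k+1}-x_k\|^2$ that is then absorbed by the cross-term identity; one must also be careful that the lower bound $0\le F(x)-(F(y)+\langle g_{\delta,L,q}(y),x-y\rangle)$ in Definition~\ref{eq:def2} is what legitimizes treating $g_k$ as a subgradient of $F$ at $x_k$ when comparing to $x^*$, a step unavailable in the purely nonconvex Definition~\ref{eq:def1}. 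The passage from $k+1$ to $k$ in the denominator is a harmless weakening.
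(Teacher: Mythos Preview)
Your proposal is correct and is exactly what the paper's one-line proof (which just invokes \eqref{def:inx-q2} and Theorem~2 of \cite{DeGlNe:13}) would spell out: the standard convex proximal-gradient telescoping argument applied with the modified descent inequality. There is a harmless sign slip in your intermediate display---the $\tfrac{1}{2\alpha_k}\|x_{k+1}-x_k\|^2$ term should carry a $+$, and after the three-point identity it cancels exactly rather than leaving a residual $-\tfrac{1}{\alpha_k}\|x_{k+1}-x_k\|^2$ to discard---but your per-iteration bound and the final result are correct.
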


\vspace{-0.5cm}

\begin{proof}
Follows from \eqref{def:inx-q2} and Theorem 2 in \cite{DeGlNe:13}.
\end{proof}

\noindent Since we have the freedom of choosing $\rho$, let us minimize the right hand side of \eqref{eq:rate-cxe} over $\rho$. Then,  $\rho$ must satisfy $\frac{qR^2}{2k} - \frac{q \delta^{\frac{2}{2-q}}}{2}\rho^{\frac{-2}{2-q}} = 0.$ Thus, the optimal choice is $\rho = \frac{\delta}{R^{2-q}}k^\frac{2-q}{2}$. Finally, fixing the number of iterations $k$ and replacing this expression in equation \eqref{eq:rate-cxe}, we get:
\begin{align*}
f(\hat{x}_k) - f^* \leq \frac{L R^2}{2k} + \delta\frac{(2+q) R^{q}}{2k^{\frac{q}{2}}}. 
\end{align*}
One can notice that our rate in function values is of order $\mathcal{O}(k^{-1} + \delta k^{-\frac{q}{2}})$, while in \cite{DeGlNe:13} the rate is of order $\mathcal{O}(k^{-1} + \delta)$. Hence, when $q>0$, regardless of the accuracy of the oracle, our second term  diminishes, while in \cite{DeGlNe:13} it remains constant. Hence, our new definition of inexact oracle of degree $q$, Definition \ref{eq:def2}, is also beneficial in the convex case  when analysing proximal gradient type methods, i.e.,  large $q$ yields better rates. 

\medskip 

\noindent \red{We also consider an extension of the fast inexact projected gradient method from \cite{DeGlNe:13}, where the projection is replaced by a proximal step with respect to the function $h$ (see \cite{Nes:13}), called FI-PGM. Note that  the inexactness in FI-PGM comes from the approximate computation of the (sub)gradient of $F$, denoted $g_{\delta_k,L_k,q}(x_k)$, as given in  Definition \ref{eq:def2}. Let $(\theta_k)_{k\geq 0}$ be a sequence such that:
\begin{align}\label{eq:cond_seq}
    \theta_0 \in (0,1],\quad
    \frac{\theta_{k+1}^2}{L_{k+1}}\leq A_{k+1}:=\sum_{i=0}^{k+1}\frac{\theta_i}{L_i} \;\;\;  \forall k\geq 0.
\end{align}
Then, the fast inexact proximal gradient method (FI-PGM) is as follows:
\begin{algorithm}\label{algo2}
\caption{Fast inexact proximal gradient method (FI-PGM)}
\begin{enumerate}
  \item[1.] Given $x_0 \in \text{dom}\; h$, $\theta_0\in (0,1]$ and $0\leq q < 2$.
   \item[] For $k\geq 0$ do:
  \item[2.] Choose $\delta_k$, $L_k$ and 
        $\alpha_k$. Obtain $g_{\delta_k,L_k,q}(x_k)$.
  \item[3.] Compute $y_k = \text{prox}_{\alpha_k
h}\left(x_{k}-\alpha_k g_{\delta_k,L_k,q}(x_k)\right)$.
 \item[4.] Compute $z_k = \argmin \frac12\|x - x_0\|^2 + \sum_{i=0}^{k} \frac{\theta_i}{L_i}\langle  g_{\delta_k,L_k,q}(x_i),x - x_i \rangle + h(x)$.
  \item[5.] Choose $\theta_{k+1}$ satisfying condition \eqref{eq:cond_seq} and compute $A_{k+1} = \sum_{i=0}^{k+1}\frac{\theta_i}{L_i}.$
 \item[6.] Compute $x_{k+1} = \tau_k y_{k} + (1 - \tau_k)z_k$ using $\tau_k = \frac{\theta_{k+1}}{A_{k+1}L_{k+1}}$. 
\end{enumerate}
\end{algorithm}\\
\noindent Using a similar proof  as in \cite{DeGlNe:13}, we get the following convergence rate for FI-PGM algorithm:
\begin{corollary}
Let $F$ satisfy the assumptions of Lemma \ref{th:nonac_q} and  $(y_k)_{k\geq 0}$ be generated by FI-PGM. Then, for all $\rho >0$, we have the following rate:
\begin{align}\label{eq:cve-acc}
f(y_{k}) - f^* \leq \frac{4(L+q\rho)R^2}{(k+1)(k+2)} + \frac{(k+3)(2-q)\delta^{\frac{2}{2-q}}}{2\rho^{\frac{q}{2-q}}}.
\end{align}
\vspace{-0.5cm}
\end{corollary}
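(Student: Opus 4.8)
The plan is to mimic the analysis of the fast inexact projected gradient method from \cite{DeGlNe:13}, replacing the projection onto $Q$ by the proximal step with respect to $h$ and carefully tracking how the extra term $\delta\|x-y\|^q$ propagates through the estimate sequence. The key observation is inequality \eqref{def:inx-q2}, which shows that, for any fixed $\rho>0$, the function $F$ equipped with a $(\delta,L)$-oracle of degree $q$ also behaves like a function equipped with an ordinary inexact $(\tilde\delta,\tilde L)$-oracle in the sense of \eqref{def:inxt} with $\tilde L = L+q\rho$ and $\tilde\delta = \frac{(2-q)\delta^{2/(2-q)}}{2\rho^{q/(2-q)}}$. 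Consequently, one can invoke the known convergence theory for the fast method verbatim, with $L$ replaced by $L+q\rho$ and $\delta$ replaced by $\tilde\delta$, and then simply substitute these values at the end.

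Concretely, I would proceed in the following steps. First, recall (or re-derive, following \cite{DeGlNe:13,Nes:13}) that for the fast inexact proximal scheme, using an inexact $(\tilde\delta,\tilde L)$-oracle, one has at each iteration the descent-type inequality coming from \eqref{def:inx-q2} applied at $y=x_k$, $x=y_k$, together with the left-hand inequality $0\le F(x)-(F(y)+\langle g,x-y\rangle)$ which guarantees that $g_{\delta,L,q}(y)$ acts as an exact subgradient-like object (this is exactly why Definition \ref{eq:def2} adds the lower bound). Second, build the standard estimate sequence: define $\psi_k(x) = \frac12\|x-x_0\|^2 + \sum_{i=0}^{k-1}\frac{\theta_i}{L_i}\big(F(x_i)+\langle g_{\delta_i,L_i,q}(x_i),x-x_i\rangle + h(x)\big)$, so that $z_k$ in Step 4 is its minimizer, and show by induction — using the coupling $x_{k+1}=\tau_k y_k+(1-\tau_k)z_k$ with $\tau_k=\frac{\theta_{k+1}}{A_{k+1}L_{k+1}}$ and the condition \eqref{eq:cond_seq} on $\theta_k$ — that $A_k f(y_{k-1}) \le \min_x \psi_k(x) + (\text{accumulated error})$. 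Third, bound the accumulated error: each iteration contributes a term proportional to $\tilde\delta$, and since the condition \eqref{eq:cond_seq} forces $A_k$ to grow like $k^2$ (more precisely $A_k \ge \frac{(k+1)(k+2)}{4L}$ for constant $L_k=L$, $\theta_k=\frac{k+1}{2}$), the total error after $k$ steps is of order $k\cdot\tilde\delta$. Fourth, divide by $A_k$, use $\psi_k(x^*)\le A_k f^* + \frac12 R^2$, and collect constants to obtain $f(y_k)-f^* \le \frac{4\tilde L R^2}{(k+1)(k+2)} + (k+3)\tilde\delta$ (the precise constant $(k+3)$ coming from the sum $\sum_{i=0}^{k}\theta_i/(A_k L) \le$ const, exactly as in \cite{DeGlNe:13}). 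Finally, substitute $\tilde L = L+q\rho$ and $\tilde\delta = \frac{(2-q)\delta^{2/(2-q)}}{2\rho^{q/(2-q)}}$ to arrive at \eqref{eq:cve-acc}.

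The main obstacle is the second step — verifying that the estimate-sequence induction goes through with the proximal step in place of the projection. In \cite{DeGlNe:13} the term $h$ is the indicator of a convex set $Q$, so the argument only uses first-order optimality of a Euclidean projection; here $h$ is a general closed convex function, so one must instead use the strong convexity of $\psi_k$ together with the subgradient inequality for $h$ at $z_k$ (via the optimality condition $0\in z_k - x_0 + \sum_{i}\frac{\theta_i}{L_i}g_{\delta_i,L_i,q}(x_i) + \partial h(z_k)$). This is the technical heart of the classical accelerated proximal gradient analysis of Nesterov \cite{Nes:13}, so the claim "using a similar proof as in \cite{DeGlNe:13}" is legitimate, but the $h$-terms must be carried along in $\psi_k$ from the start (as written in Step 4 of FI-PGM) rather than appended afterwards. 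Once that bookkeeping is done, the propagation of the degree-$q$ error is entirely routine thanks to \eqref{def:inx-q2}, which reduces everything to the already-known inexact-oracle case.
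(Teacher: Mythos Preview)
Your proposal is correct and follows exactly the paper's own approach: the paper's proof is the single line ``follows from \eqref{def:inx-q2} and Theorem 4 in \cite{DeGlNe:13}'', which is precisely your reduction via $\tilde L = L+q\rho$, $\tilde\delta = \frac{(2-q)\delta^{2/(2-q)}}{2\rho^{q/(2-q)}}$ followed by invoking the known fast inexact scheme (with the projection replaced by the prox, as in \cite{Nes:13}). Your additional discussion of the estimate-sequence bookkeeping with general $h$ is a useful elaboration of what ``similar proof as in \cite{DeGlNe:13}'' actually entails, but it does not deviate from the paper's route.
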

\vspace{-0.5cm}
\begin{proof}
The proof follows from \eqref{def:inx-q2} and Theorem 4 in \cite{DeGlNe:13}.
\end{proof}
\noindent The optimal $\rho$ in the right hand side of inequality \eqref{eq:cve-acc} is
\begin{align*}
\rho^* = \frac{\big((k+1)(k+2)(k+3)\big)^{\frac{2-q}{2}}}{(8R^2)^{\frac{2-q}{2}}}\delta.
\end{align*}
Further, replacing $\rho$ with its optimal value in the inequality \eqref{eq:cve-acc}, we get
\begin{align*}
f(y_k) - f^* &\leq \frac{4L R^2}{(k+1)(k+2)} +
\frac{ q8^{\frac{q}{2}} R^q (k+3)}{2((k+1)(k+2)(k+3))^{\frac{q}{2}}} \delta\\
& + \frac{(2-q)8^{\frac{q}{2}} R^q(k+3)}{2((k+1)(k+2)(k+3))^{\frac{q}{2}}} \delta\\
             \quad & =  \frac{4L R^2}{(k+1)(k+2)} + \frac{8^{\frac{q}{2}} R^q(k+3)}{((k+1)(k+2)(k+3))^{\frac{q}{2}}} \delta.\\
             & =  \mathcal{O}\left(\frac{LR^2}{k^2}\right) +  \mathcal{O}\left(\frac{R^q}{k^{\frac{3q}{2} - 1}}\delta\right).   
\end{align*}
Hence, if $q > \frac{2}{3}$, then FI-PGM  doesn't have error accumulation under our inexact oracle as the rate is of order   $\mathcal{O}\left(k^{-2} + \delta k^{1-\frac{3q}{2}}\right)$, while in \cite{DeGlNe:13} the FI-PGM scheme always displays error accumulation, as the convergence rate is of order $\mathcal{O}(k^{-2} + \delta k)$.  Therefore, the same conclusion holds as for I-PGM, i.e., for the FI-PGM  scheme in the convex setting it is beneficial to have an inexact first-order oracle with large degree $q$.} 

\begin{remark}
In our Definition \ref{eq:def1} we have considered exact zero-order information. However, it is possible to change this definition considering also inexact zero-order information for the nonconvex case. More precisely,  we can change Definition \ref{eq:def1} as follows
\begin{equation*}
\left\{\begin{split}
    &F_{\delta_0}(x) - F(x) \leq \delta_0, \\
    &  F(x) \!-\! \left(F_{\delta_0}(y) + \langle g_{\delta,L,q}(y),x-y \rangle\right) \!\leq\! \frac{L}{2}\| x-y \|^{2} + \delta\| x - y \|^q.
    \end{split}\right.
\end{equation*}
With this new definition, the convergence result in Theorem \ref{th:2} becomes:
\begin{align*}
    \sum_{j=0}^{k}\alpha_j \|g_{\delta_{j},L_{j},q}(x_j) + p_{j+1} \|^2\leq f(x_0) - f_\infty + \frac{\sum_{j=0}^{k}(2-q)\delta_j^{\frac{2}{2-q}}}{2\rho^{\frac{q}{2-q}}} + \sum_{j=0}^{k}\delta_0.
\end{align*}
Hence the rate in this case is also influenced by the inexactness of the zero-order information (i.e., $\delta_0$).
Note that for the convex case, the previous extension is not possible in Definition \ref{eq:def2} when $q>0$, since we must have:
\begin{align*}
0\leq  F(x) \!-\! \left(F_{\delta_0}(y) + \langle g_{\delta,L,q}(y),x-y \rangle\right) \!\leq\! \frac{L}{2}\| x-y \|^{2} + \delta\| x - y \|^q,
\end{align*}
which implies for $x=y$ that $F(x) = F_{\delta_0}(x)$. Since we want to have consistency between Definitions \ref{eq:def1} and \ref{eq:def2}, we have chosen to work with the exact zero-order information in our previous nonconvex convergence analysis.
    
\end{remark}


\section{Numerical simulations}
In this section, we evaluate the performance of I-PGM for a composite problem arising in image restoration. Namely, we consider the following nonconvex optimization problem \cite{StThPa:17}:

\vspace{-0.9cm}

\begin{align}\label{eq:prb_sm2}
&\min_{x\in\mathbb{R}^n} \sum_{i=1}^{N} \text{log}\left(\left(a_i^{T}x - b_i\right)^2 + 1\right),\\
&\text{s.t.}\;\; \|x\|_1\leq R,\nonumber
\end{align}

\vspace{-0.2cm}

\noindent where $R>0$, $b\in\mathbb{R}^N$ and $a_i\in\mathbb{R}^n$, for $i=1:N$. In image restoration, $b$ represents the noisy blurred image and $A = (a_1,\cdots,a_N) \in\red{\mathbb{R}^{n\times N}}$ is a blur operator \cite{StThPa:17}. This problem fits into our general problem \eqref{eq:opt_prblm}, with $F(x) = \sum_{i=1}^{N} \text{log}\left( \left(a_i^{T}x - b_i \right)^2 + 1\right)$, which is a nonconvex function with Lipschitz continuous gradient of constant $L_F := \sum_{i=1}^{N}\|a_i\|^2$, and $h(x)$ is the indicator function of the bounded convex set $\lbrace x:\|x\|_1\leq R\rbrace$. 
 We generate the inexact oracle by adding normally distributed random noise $\delta$ to the true gradient, i.e., $g_{\delta,L,q}(x) := \nabla F(x) + \delta$. This is a particular case of Example \ref{eq:ex1}. However, for all $x$ and $y$ satisfying $\|x\| \leq R$, $\|y\| \leq R $, we have the following:
\begin{align*}
    \delta\|x - y\|&= \delta\|x - y\|^{1-q}\|x - y\|^q \leq \delta(2R)^{1-q}\|x - y\|^q. 
\end{align*}
Thus, this example  satisfies Definition \ref{eq:def1} for all $q\in  [0, 1]$. We apply I-PGM for this particular example where we consider three choices for the degree $q$:  $0$, $1/2$ and $1$. Recall that the convergence rate of I-PGM with constant step size is (see Corollary \ref{eq:col1}, first statement):
\begin{align}\label{eq:uper_bnd}
&\min_{j=0:k} \norm{g_j \!+\! p_{j+1}}^2 \leq \frac{2(q\!+\!1)L(f(x_0) \!-\! f^*)}{k+1} +(q\!+\!1)(2\!-\!q)L^{\frac{2-2q}{2-q}}\delta^{\frac{2}{2-q}}.
\end{align} 
At each iteration of I-PGM we need to solve the following convex subproblem:
\begin{align*}
&\min_{x\in\mathbb{R}^n}  F(x_k) + \langle g_{\delta,q}(x_k),x-x_k\rangle + \frac{L + q\rho}{2}\|x - x_k\|^2,\;\; \text{s.t.} \;\; \|x\|_1 \leq R.
\end{align*}
This subproblem has a closed form solution (see e.g.,  \cite{Tib:96}). We compare I-PGM with constants step size $\alpha_k = \frac{1}{2(L_F+q\rho)}$ and $\rho = L_F$ for three  choices of $q=0, 1/2, 1$ and three choices of noise norm $\|\delta\| \leq 0.1, 1, 3$, respectively. The results are given in Fig. 1 (dotted lines), \red{where we plot the evolution of the error $\min_{j=0:k}\|\frac{1}{\alpha_k}(x_{j+1} - x_j)\|^2$, which corresponds to the gradient mapping}. In the same figure  we also plot the theoretical bounds \eqref{eq:uper_bnd} for $q=0, 1/2, 1$ (full lines). { Our main figures are Figure \ref{fig:three graphs}(a), (c), and (d), while Figure \ref{fig:three graphs}(b) is a subfigure (zoom) of Figure \ref{fig:three graphs}(a), displaying only the first 300 iterations. Moreover, one can see in these main figures (i.e., Figure \ref{fig:three graphs}(a), (c), and (d)) that the behaviour of our algorithm for $q=1$ is better than for $q=1/2$. Similarly, the behaviour of our algorithm for $q=1/2$ is better than for $q=0$. One can observe these better behaviours after 300 iterations when the error $\delta$ is small (see Figure \ref{fig:three graphs}(c) and (d)). However, when the error $\delta$ is large, we need to perform a larger number of iterations before we can observe these behaviours,  (see Figure \ref{fig:three graphs}(a) and (b)). This is  natural, since large errors on the gradient approximation  must have impact on the convergence speed.} Hence, as the degree $q$ increases or the norm of the noise decreases, better accuracies for the norm of the gradient mapping can be achieved, which supports our theoretical findings. 

\medskip

\noindent \red{ Moreover, from the numerical simulations, one can observe that the gap between the theoretical and the practical bounds is large in Figure \ref{fig:three graphs}(c) and (d). We believe  that this happens because, in the convergence analysis, the theoretical bounds are derived under worst-case scenarios (i.e., the convergence analysis must account for the worst case direction generated by the inexact first-order  oracle, while in practical implementations, which often involve randomness, one usually doesn't encounter these worst-case directions). However, the simulations in Figure \ref{fig:three graphs}(a) show that the gap between the theoretical bounds and the practical behavior is not too large. More precisely, we have generated at each iteration 100 random directions and, in order to update the new point, we have chosen the worst direction with respect to the gradient mapping (i.e., the largest)  $\|x_{k+1} - x_k\|$). The results are given in Figure \ref{fig:three graphs}(a), where one can see that the theoretical and practical bounds are getting closer for sufficiently large number of iterations.}


\begin{figure}[ht]
     \centering
     \begin{subfigure}[b]{0.45\textwidth}
         \centering
         \includegraphics[width=\textwidth]{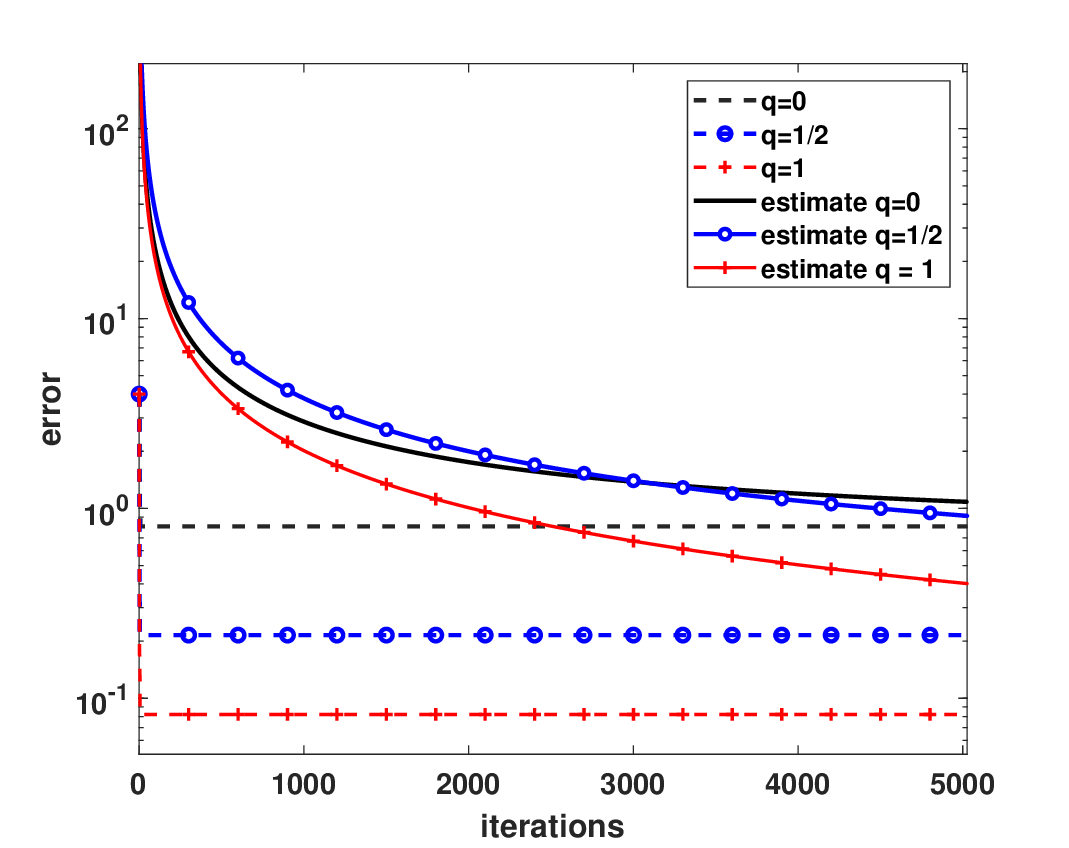}
         \caption{$\|\delta\| \leq 3$}
         \label{fig:1}
     \end{subfigure}
     \begin{subfigure}[b]{0.45\textwidth}
         \centering
         \includegraphics[width=\textwidth]{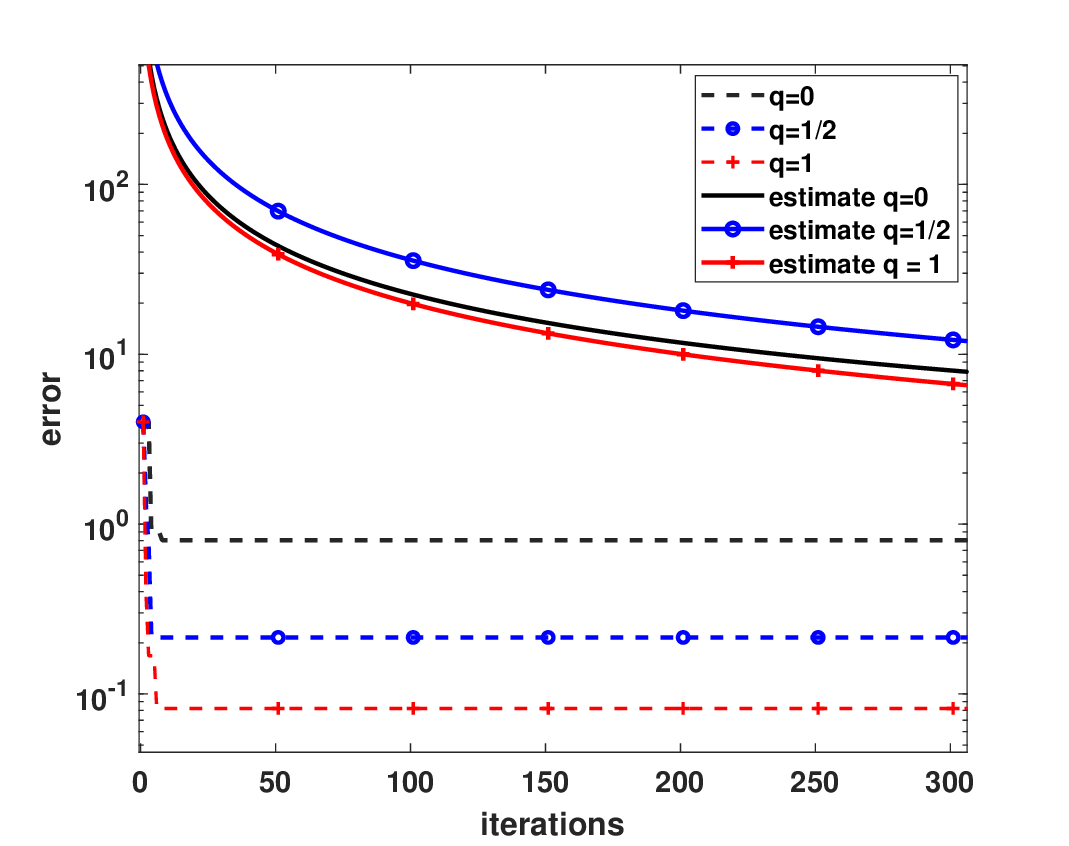}
         \caption{$\|\delta\| \leq  3$}
         \label{fig:2}
     \end{subfigure}
     \hspace{0.4cm}
     \begin{subfigure}[b]{0.45\textwidth}
         \centering
         \includegraphics[width=\textwidth]{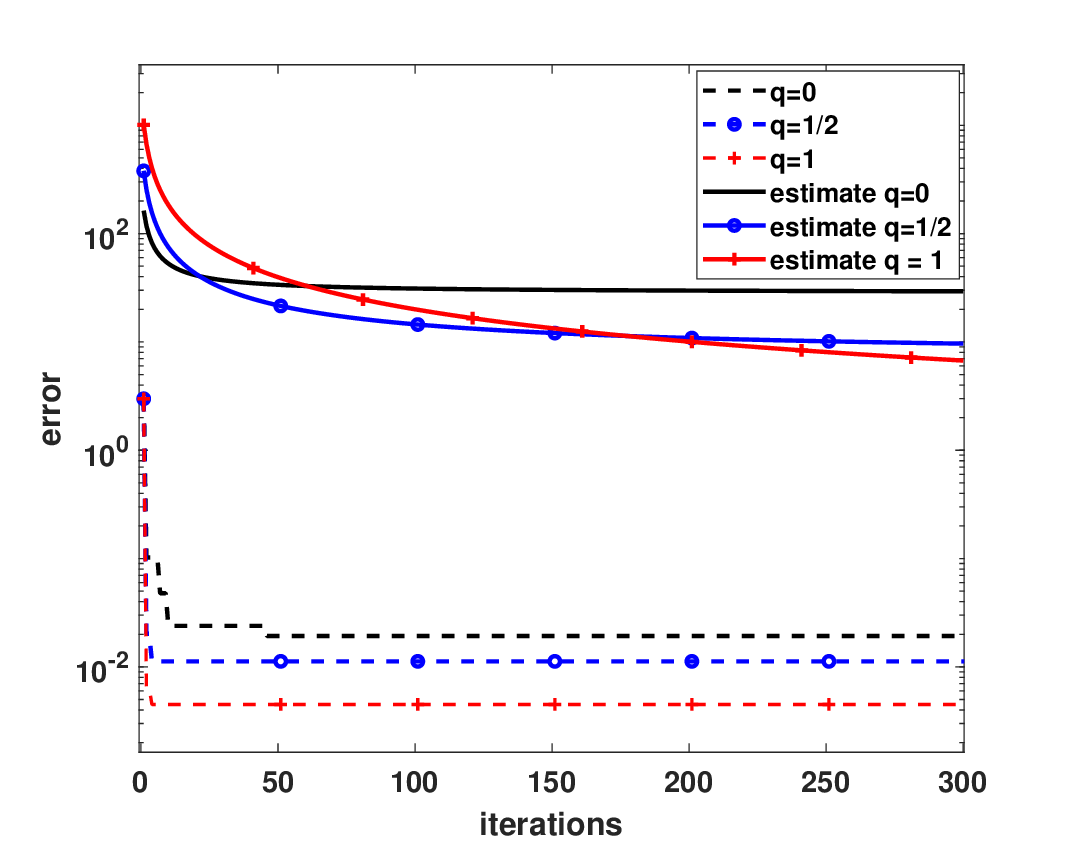}
         \caption{$\|\delta\| \leq 1$}
         \label{fig:3}
     \end{subfigure}
          \hspace{0.4cm}
     \begin{subfigure}[b]{0.45\textwidth}
         \centering
         \includegraphics[width=\textwidth]{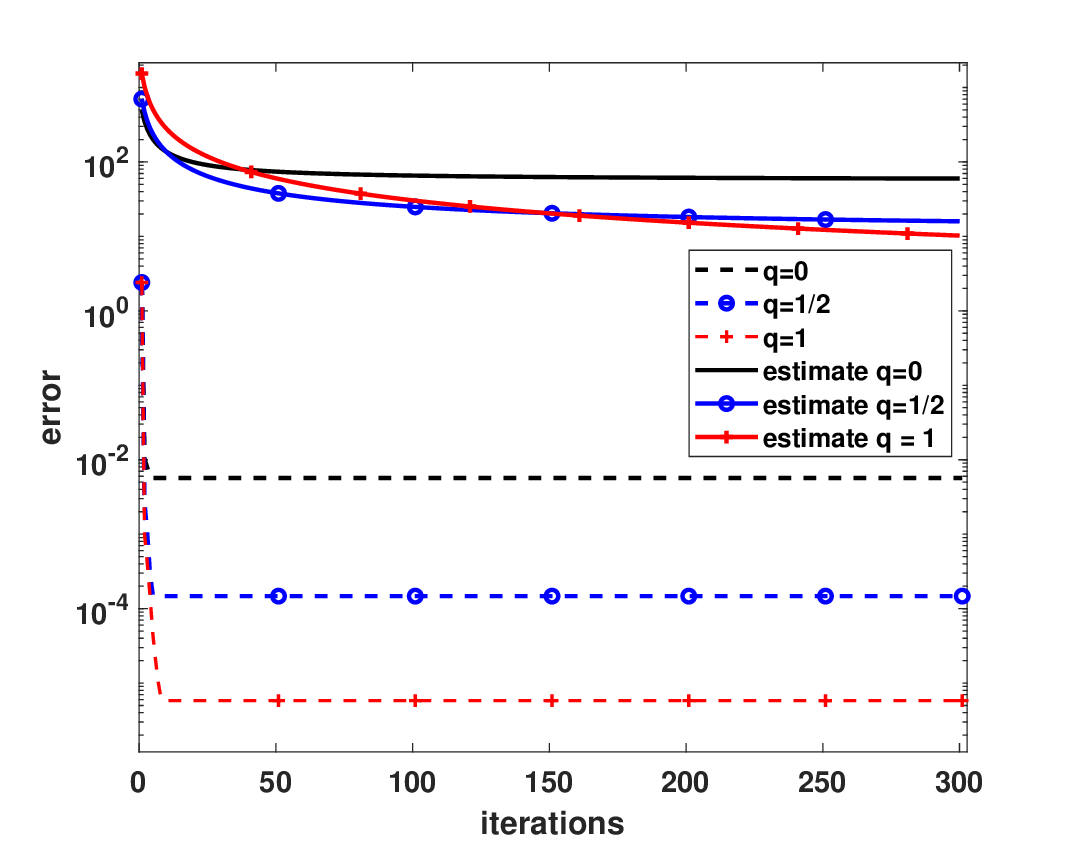}
         \caption{$\|\delta\| \leq 0.1$}
         \label{fig:4}
     \end{subfigure}
        \caption{Practical (dotted lines) and theoretical (full lines) performances of the I-PGM algorithm for different choices of $q$ and $\delta$, and with  $R = 4$. {Figure (b) represents a zoom of the left corner from Figure (a).} }
        \label{fig:three graphs}
\end{figure}

\bmhead{Acknowledgments}
\noindent The research leading to these results has received funding from: ITN-ETN project TraDE-OPT funded by the European Union’s Horizon 2020 Research and Innovation Programme under the Marie Skolodowska-Curie grant agreement No. 861137;  UEFISCDI PN-III-P4-PCE-2021-0720, under project L2O-MOC, nr. 70/2022.\\

\medskip 

\noindent Data sharing not applicable to this article as no datasets were generated or analyzed during the current study.

\end{document}